\documentclass[a4paper,12pt,leqno]{article}
\usepackage[margin=3cm]{geometry}
\usepackage{amssymb}
\usepackage{amsxtra}
\usepackage{amsthm}
\usepackage[T1]{fontenc}
\usepackage{lmodern}
\usepackage[pagebackref,breaklinks]{hyperref}
\usepackage[lite,abbrev,shortalphabetic]{amsrefs}
\usepackage[shortlabels]{enumitem}
\usepackage{colonequals}
\theoremstyle{definition}
\newtheorem{defn}{Definition}[section]

\newtheorem{remark}[defn]{Remark}
\newtheorem{notation}[defn]{Notation}
\theoremstyle{plain}
\newtheorem{lemma}[defn]{Lemma}
\newtheorem{prop}[defn]{Proposition}
\newtheorem{cor}[defn]{Corollary}
\newtheorem{theorem}[defn]{Theorem}
\numberwithin{equation}{section}
\setlist[enumerate,1]{(a)}

\newcommand{\N}{\mathbb{N}}
\newcommand{\Z}{\mathbb{Z}}
\newcommand{\Q}{\mathbb{Q}}
\newcommand{\R}{\mathbb{R}}

\newcommand{\bc}{\mathbf{c}}
\newcommand{\bs}{\mathbf{s}}
\newcommand{\bu}{\mathbf{u}}
\newcommand{\bv}{\mathbf{v}}
\newcommand{\bw}{\mathbf{w}}

\newcommand{\cC}{\mathcal{C}}
\newcommand{\cD}{\mathcal{D}}
\newcommand{\cP}{\mathcal{P}}
\newcommand{\cS}{\mathcal{S}}

\newcommand{\alt}{\mathrm{alt}}
\newcommand{\lex}{\mathrm{lex}}

\newcommand{\precalt}{\preceq_{\alt}}
\newcommand{\preclex}{\preceq_{\lex}}
\newcommand{\precnalt}{\prec_{\alt}}
\newcommand{\precnlex}{\prec_{\lex}}

\begin{document}
\title{Alternating geometric progressions modulo one and Sturmian words}
\author{Qing Lu\thanks{School of Mathematical Sciences, Beijing Normal University, Beijing
100875, China; email: \texttt{qlu@bnu.edu.cn}.} \and Weizhe 
Zheng\thanks{Morningside Center of Mathematics, Academy of Mathematics and 
Systems Science, Chinese Academy of Sciences, Beijing 100190, China; 
University of Chinese Academy of Sciences, Beijing 100049, China; email: 
\texttt{wzheng@math.ac.cn}.}}
\date{} \maketitle

\begin{abstract}
Let $b\ge 2$ be an integer. Using Sturmian words we describe all irrational 
real numbers $\xi$ such that the image in $\R/\Z$ of the sequence $(\xi 
(-b)^n)_{n\ge 0}$ is contained in an interval of length 
$b^{-1}+b^{-2}-b^{-3}$. In previous work we showed that the image cannot be 
contained in a shorter interval. 
\end{abstract}

\section{Introduction}
Distribution modulo one of geometric progressions has been the subject of 
extensive studies. The famous question considered by Mahler \cite{Mahler} 
concerning the case of common ratio $3/2$ remains open. A theorem of Dubickas 
\cite[Theorem~1]{Dubickas-BLMS} implies that for any integer $b\ge 2$ and any 
irrational real number $\xi$, the numbers $\{\xi b^n\}$, $n\ge 0$ cannot be 
contained in an interval of length less than $1/b$. Here $\{x\}\colonequals 
x-\lfloor x\rfloor $ denotes the fractional part of a real number~$x$. 
Bugeaud and Dubickas \cite[Theorem 2.1]{BD} showed that for an irrational 
real number $\xi$, the numbers $\{\xi b^n\}$, $n\ge 0$ are contained in an 
interval of length equal to $1/b$ if and only if the representation 
$.w_1w_2\dots$ of $\{\xi\}$ in base $b$ is given by a Sturmian word 
$w_1w_2\dots$ over an alphabet consisting of two adjacent integers. As 
pointed out in \cite{AG}, the case $b=2$ was proved by Veerman \cite[Theorem 
2.1]{Veerman} and rediscovered several times. 

For alternating geometric progressions with integral common ratio, we showed 
in previous work \cite[Theorem 1.1]{LZ} that for any irrational real 
number~$\xi$, $(\pi(\xi (-b)^n))_{n\ge 0}$ is not contained in any interval 
of $\R/\Z$ of length less than $b^{-1}+b^{-2}-b^{-3}$. Here $\pi\colon \R\to 
\R/\Z$ denotes the projection. In this paper, we describe the irrational real 
numbers $\xi$ such that $(\pi(\xi (-b)^n))_{n\ge 0}$ is contained in an 
interval of $\R/\Z$ of length equal to $b^{-1}+b^{-2}-b^{-3}$. 

To state our results we need some terminology from combinatorics on words. An 
infinite word $w_1w_2\dots$ is said to be \emph{aperiodic} if there do not 
exist integers $k,N\ge 1$ such that $w_n=w_{n+k}$ for all $n\ge N$. The 
\emph{slope} of an infinite word $w_1w_2\dots$ over the alphabet $\Z$ is 
defined to be $\lim_{n\to \infty}\frac{\sum_{i=1}^n w_i}{n}$. An infinite 
word over $\Z$ with irrational slope is necessarily aperiodic. An infinite 
word $s_1s_2\dots$ over the alphabet $\{0,1\}$ is \emph{Sturmian} if and only 
if there exists an irrational $\theta\in (0,1)$  and a $\rho\in \R$ such that 
\begin{equation}\label{eq:S1}
s_n=\lfloor (n+1)\theta +\rho\rfloor -\lfloor 
n\theta +\rho\rfloor
\end{equation}
for all $n\ge 1$ or 
\begin{equation}\label{eq:S2}
s_n=\lceil (n+1)\theta +\rho\rceil -\lceil 
n\theta +\rho\rceil
\end{equation} 
for all $n\ge 1$. These words have slope $\theta$. The \emph{characteristic 
Sturmian word} $\bc_\theta=c_1c_2\dots$ of slope $\theta$ is defined by 
\[c_n=\lfloor (n+1)\theta\rfloor -\lfloor 
n\theta \rfloor=\lceil (n+1)\theta\rceil -\lceil n\theta \rceil
\] 
for all $n\ge 1$. We refer to \cite[Chapter~2]{Lothaire} for a detailed 
introduction to Sturmian words. 

For a word $\bw=w_1w_2\dots$, we write $D\bw=w_1w_1w_2w_2\dots$ and 
$T\bw=w_2w_3\dots$. Note that $\bw$ has slope $\theta$ if and only if $D\bw$ 
has slope $\theta$. Moreover, $T$ preserves Sturmian words. For a letter~$a$ 
and $l\ge 0$ (finite or infinite), we write $a^l$ for the word $aa\dots $ of 
length $l$. 

\begin{notation}
We let $\cD$ denote the collection of infinite words over the alphabet 
$\{0,1\}$ that are of the form $0^l(D\bs)$ or $1^l(D\bs)$, where $l\ge 0$ and 
$\bs$ is a Sturmian word. We let $\cC$ denote the subset of $\cD$ consisting 
of words $\bw\in \cD$ such that $T^n\bw$ equals $011(D\bc)$ or $100(D\bc)$ 
for some $n\ge 0$ and some characteristic Sturmian word~$\bc$ (which is 
necessarily $\bc_\theta$, where $\theta$ is the slope of $\bw$). 
\end{notation}

For a word $\bw=w_1w_2\dots$ over a finite set of integers and a real number 
$r$ satisfying $\lvert r\rvert<1$, we write $t_r(\bw)=\sum_{i=1}^\infty 
w_ir^i\in \R$. In particular, $t_{-1/b}(\bw)$ is the real number 
$.w_1w_2\dots$ in base $-b$. For comments on the history of negative-base 
number systems, see \cite[Section 4.1]{Knuth}. 

Our main result is the following.

\begin{theorem}\label{t:main}
Let $b\ge 2$ be an integer and let $\xi$ be an irrational real number. Then 
the sequence $(\pi(\xi(-b)^n))_{n\ge 0}$ is contained in an interval of 
$\R/\Z$ of length $b^{-1}+b^{-2}-b^{-3}$ if and only if 
$\xi=g/(b+1)+t_{-1/b}(\bw)$, where $g$ is an integer and $\bw\in \cD$. In 
this case, $\xi$ is transcendental and the endpoints of the interval are 
\begin{equation}\label{eq:ex}
\pi(g/(b+1)+t_{-1/b}(100D\bc_{\theta})),\quad \pi(g/(b+1)+t_{-1/b}(011D\bc_{\theta})),
\end{equation}
where $\theta$ denotes the slope of $\bw$, and the sequence is contained in a 
semiopen interval with these endpoints. In particular, the sequence is 
contained in an open interval of $\R/\Z$ of length $b^{-1}+b^{-2}-b^{-3}$ if 
and only if $\xi=g/(b+1)+t_{-1/b}(\bw)$, where $g$ is an integer and $\bw\in 
\cD\backslash \cC$.  
\end{theorem}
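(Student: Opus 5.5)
Write $\xi$ in base $-b$ as $\xi=g/(b+1)+t_{-1/b}(\bw)$ and reduce the problem to the combinatorics of the orbit of $\bw$ under the shift $T$. The shift $T$ corresponds to multiplication by $-b$ modulo one, so $\pi(\xi(-b)^n)=\pi(g/(b+1)+t_{-1/b}(T^n\bw))$ up to the translate by $\pi(g(-b)^n/(b+1))$. Since $(-b)^n\equiv 1 \pmod{b+1}$ fails only in sign, I will absorb the constant $g/(b+1)$ by a rotation: $\pi(g(-b)^n/(b+1))=\pi(g/(b+1))$ because $-b\equiv 1 \pmod{b+1}$. So the constant is harmless, and the question becomes when the set $\{t_{-1/b}(T^n\bw)\bmod 1\}$ lies in an interval of length $\ell=b^{-1}+b^{-2}-b^{-3}$.

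\textbf{Step 1 (digit restriction).} Using the bound of \cite[Theorem 1.1]{LZ} and its proof, I will show that if the orbit lies in an interval of length $\ell$, then after a choice of $g$ the digits of $\bw$ (in a suitable expansion with digits in $\{0,\dots,b\}$) must lie in $\{0,1\}$. The reason is that a digit window wider than $\{0,1\}$ forces two orbit points at distance more than $\ell$. The integer shifts of the digit alphabet $\{a,a+1\}$ are absorbed into $g/(b+1)$, since $t_{-1/b}(a^\infty)=-a/(b+1)$.

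\textbf{Step 2 (order comparison).} For binary words, comparing $t_{-1/b}(\bu)$ and $t_{-1/b}(\bv)$ is governed by the alternating lexicographic order $\precalt$: at the first differing index $i$ the sign is $(-1)^i$, and the tail contributes at most $b^{-i}\cdot\frac{1}{b-1}$-ish. The condition ``all shifts lie in an interval of length $\ell$'' then translates into a condition of the form
\[ \mathbf{m}\precalt T^n\bw\precalt \mathbf{M}\quad\text{for all }n, \]
with $t(\mathbf{M})-t(\mathbf{m})\le \ell$. Here $\ell=t_{-1/b}(100\dots)-t_{-1/b}(011\dots)$ matches the leading $100$ versus $011$ difference. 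The key combinatorial fact needed is that the binary words whose whole $T$-orbit lies in such an alternating-order window with gap exactly $\ell$ are precisely those in $\cD$. The extremal words are $100D\bc_\theta$ and $011D\bc_\theta$.

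\textbf{Step 3 (characterizing the window).} This is the main obstacle. Doubled words arise because a pattern like $010$ or $101$ (an isolated letter) makes two shifts differ too much. Forbidding these forces $\bw$ to be, after a prefix, a concatenation of blocks $00$ and $11$, that is, $0^l D\bs$ or $1^l D\bs$. The remaining condition on $\bs$ is that its orbit is balanced in the lex order, with $\bs$ aperiodic by irrationality of $\xi$. I will translate the alternating order on $D\bs$ into the ordinary lexicographic order on $\bs$, checking how doubling interacts with the sign $(-1)^i$. I then invoke the classical characterization (as in Bugeaud--Dubickas \cite{BD}/Veerman) that an aperiodic binary word with all shifts between $0\bc_\theta$ and $1\bc_\theta$ is Sturmian, with the sharp bounds attained by the characteristic word. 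This yields both directions and the endpoints \eqref{eq:ex}.

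Semi-openness follows because only one of $T^n\bw=011D\bc$ or $100D\bc$ can occur, and not both, which identifies $\cC$. Transcendence follows from the fact that Sturmian expansions in an integer base are transcendental (Adamczewski--Bugeaud), applied to $D\bs$, which has low complexity and is non-eventually-periodic.
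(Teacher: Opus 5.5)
\textbf{Main gap: Step 3.} Your outline has the right shape: restrict the digits to two adjacent values, exclude isolated letters, pass to doubled words, then use a Sturmian characterization. But Step~3 rests on a false implication. Excluding $010$ and $101$ only says that every non-initial run of $\bw$ has length at least $2$. It does not force $\bw=a^lD\bs$, in which every non-initial run has \emph{even} length. Two examples show this.
\begin{enumerate}
\item An aperiodic word whose runs all have length $2$ or $3$ avoids $010$ and $101$. It is not in $\cD$ as soon as a non-initial run of length $3$ occurs.
\item Take $1^20^3D\bs'$ with $\bs'$ Sturmian starting with $1$. After the prefix $110$ it is a concatenation of blocks $00$ and $11$, yet it is not of the form $a^lD\bs$; it is not even in $\cP$.
\end{enumerate}

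Closing this gap is where the paper does its main work. It uses three more applications of the length bound through \eqref{eq:triv}.
\begin{enumerate}
\item $0111$ and $1000$ cannot both occur. So, up to exchanging letters, $\bw=1^y0^{z_0}110^{z_1}11\cdots$.
\item Each $z_n$ with $n\ge 1$ is even, by comparing $0110^{z_n-1}$ with $10^{z_n}1$ as in \eqref{eq:odd}.
\item If $y>0$ then $z_0$ is even. One compares $0110^{z_0-1}$ with $10^{z_0}1$ when $z_0\le z_m+1$ for some $m$, and $0110^{z_n}1$ with $10^{z_n+3}$ otherwise.
\end{enumerate}
Only after these steps does one know $\bw\in\cP$, i.e.\ $\bw=D\bs$ or $T(D\bs)$. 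Your alternating-order reformulation could carry these comparisons too, but they have to be made, and your proposal replaces them with an incorrect claim.

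\textbf{Second gap: the hand-off to Veerman.} Even granting $\bw\in\cP$, the reduction does not work as stated. Write the window ends as $1D(0\bu_1)$ and $0D(1\bu_2)$.
\begin{enumerate}
\item By Lemma~\ref{l:P}, the shifts $D(T^k\bs)$ lie automatically between $1^\infty$ and $0^\infty$, so they carry no information.
\item The shifts $s_kD(T^k\bs)$ give only conditional bounds: $T^k\bs\succeq_{\lex}0\bu_1$ when $s_k=1$, and $T^k\bs\preclex 1\bu_2$ when $s_k=0$.
\end{enumerate}
These bounds describe $\cS$, not Sturmian words. For example, $0^l\bs''$ with $\bs''$ Sturmian and $l$ large satisfies them, but violates Veerman's hypothesis $0\bu\preclex T^n\bs\preclex 1\bu$. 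So you need one of two additional arguments:
\begin{enumerate}
\item the forbidden-pair criterion of Lemma~\ref{l:S}, which is the paper's route; or
\item an argument that, past the leading run, a shift inside a run of $0$s (resp.\ $1$s) lies lexicographically above (resp.\ below) the shift starting that run. Together with $\bu_2\preclex\bu_1$, which is what the length bound amounts to, this restores Veerman's hypothesis on the tail.
\end{enumerate}

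\textbf{Smaller point: Step 1.} Step~1 must build the expansion from the interval. Lift $I$ into $(\eta,\eta+1)$ with $\pi(\eta)\notin I$, and set $v_n=-by_n-y_{n+1}$. Then the numbers $t_{-1/b}(T^n\bv)$ are genuine reals in an interval of length $b^{-1}+b^{-2}-b^{-3}$, and \eqref{eq:triv} can be applied. Starting from a presumed form $\xi=g/(b+1)+t_{-1/b}(\bw)$ with binary $\bw$ is circular for the forward direction.
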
 

Since $t_{-1/b}$ restricts to an injection $\cD\to \R\backslash \Q$, Theorem 
\ref{t:main} implies that the set of open intervals $I\subseteq \R/\Z$ of 
length $b^{-1}+b^{-2}-b^{-3}$ such that there exists an irrational real 
number $\xi$ with $(\pi(\xi(-b)^n))_{n\ge 0}$ contained in $I$ has the 
cardinality of the continuum. Indeed, for any irrational $\theta\in (0,1)$, 
$D\bc_\theta\in \cD\backslash \cC$ and consequently 
$\xi=t_{-1/b}(D\bc_\theta)$ satisfies this property with 
$I=\pi((t_{-1/b}(100D\bc_{\theta}), t_{-1/b}(011D\bc_{\theta})))$. 

The aforementioned result of Veerman \cite[Theorem 2.1]{Veerman} implies the 
following characterization of Sturmian words: An aperiodic word $\bw$ over 
$\{0,1\}$ is Sturmian if and only if there exists an infinite word $\bu$ over 
$\{0,1\}$ such that $0\bu\preclex T^n\bw \preclex 1\bu$ for all $n\ge 0$. 
Here $\preclex$ denotes the lexicographical order. We will briefly review 
this characterization in Section~\ref{s:pos}. For a nice survey of this 
subject, see \cite{AG}. 

In negative-base number systems, the order on real numbers is compatible with 
the \emph{alternate order} $\precalt$ on words rather than the 
lexicographical order $\preclex$. For infinite words $\bv=v_1v_2\dots$ and 
$\bw=w_1w_2\dots$ over $\Z$, we write $\bv \precalt \bw$ if $N\bv\preclex 
N\bw$, where $N\bv=(-v_1)v_2(-v_3)v_4\dots$ and similarly for $N\bw$. It is 
easy to see that, for infinite words $\bv$ and $\bw$ over $\{0,1\}$ and 
$0<r<1/2$, $\bv\precalt \bw$ if and only if $t_{-r}(\bv)\le t_{-r}(\bw)$ 
(Lemma \ref{l:t-r}). Theorem \ref{t:main} implies the following 
characterization of the class $\cD$. 

\begin{cor}\label{c:alt}
An aperiodic word $\bw$ over $\{0,1\}$ belongs to $\cD$ if and only if there 
exists an infinite word $\bu$ over $\{0,1\}$ such that $100\bu\precalt 
T^n\bw\precalt011\bu$ for all $n\ge 0$. In this case, $\bu=D\bc_\theta$ and 
\[100\bu=\inf\nolimits^\alt_{n\ge 0} T^n\bw, \quad 011\bu=\sup\nolimits^\alt_{n\ge 0} T^n \bw.\] 
Here $\theta$ denotes the slope of $\bw$, and $\inf^\alt$ and $\sup^\alt$ 
denote respectively the infimum and supremum with respect to the order 
$\precalt$. 
\end{cor}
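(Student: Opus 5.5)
We deduce Corollary~\ref{c:alt} from Theorem~\ref{t:main} with $b=3$ (any $b\ge 3$ works), using Lemma~\ref{l:t-r}. For a word $\bv$ over $\{0,1\}$ with $r=1/b<1/2$, that lemma says $\bv\mapsto t_{-1/b}(\bv)$ is order preserving from $\precalt$ to $\le$. It is also injective, since $0<r<1/2$ and the alternate order is a total order. We also use $t_{-1/b}(T^n\bw)=\{\text{shifted value}\}$: precisely, $\xi(-b)^n\equiv t_{-1/b}(T^n\bw)\pmod 1$ when $\xi=t_{-1/b}(\bw)$, because $(-b)^n t_{-1/b}(\bw)=\sum_{i\le n}w_i(-b)^{n-i}+t_{-1/b}(T^n\bw)$ and the first sum is an integer. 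All values $t_{-1/b}(\bv)$ lie in $[-b/(b^2-1),\,1/(b^2-1)]$, an interval of length $1/(b-1)<1$. Hence containment in an interval of $\R/\Z$ of length less than $1/2$ can be lifted to containment in an interval of $\R$, possibly after a shift by an integer, which we handle below.

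\emph{If direction.} Suppose $\bw$ is aperiodic and $100\bu\precalt T^n\bw\precalt 011\bu$ for all $n$. Put $\xi=t_{-1/b}(\bw)$, which is irrational because $\bw$ is aperiodic and the $(-b)$-expansion with digits in $\{0,1\}$ is unique. Then $\pi(\xi(-b)^n)=\pi(t_{-1/b}(T^n\bw))$ lies in the image of $[t_{-1/b}(100\bu),t_{-1/b}(011\bu)]$. The length of this interval is
\[
t_{-1/b}(011\bu)-t_{-1/b}(100\bu)=\tfrac1b+\tfrac1{b^2}-\tfrac1{b^3},
\]
since the tails cancel. By Theorem~\ref{t:main}, $\xi=g/(b+1)+t_{-1/b}(\bw')$ with $g\in\Z$ and $\bw'\in\cD$. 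To conclude that $\bw\in\cD$, I would show $g\equiv0$ and $\bw'=\bw$. Both $\bw$ and $\bw'$ are $\{0,1\}$-digit expansions. The number $g/(b+1)$ has the purely periodic expansion in base $-b$ with period digits in $\{0,\dots,b\}$, namely $1/(b+1)=t_{-1/b}((1)^\infty)\cdot b$-type. This is the delicate point: I would argue by comparing the endpoint sets. The endpoints given in Theorem~\ref{t:main} are $\pi(g/(b+1)+t_{-1/b}(100D\bc_\theta))$, and they must agree with the minimal interval containing the orbit. Then $\pi(g/(b+1))$ is $\pi(t_{-1/b}(\text{0-1 word}))$-shift-invariant under multiplication by $-b$, and a comparison of closures of the orbits forces $\pi(g/(b+1))=0$. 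Injectivity of $t_{-1/b}$ on $\{0,1\}$-words modulo $\Z$ (values lie in an interval of length $<1$) then gives $\bw=\bw'$.

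\emph{Only if direction.} Let $\bw\in\cD$ with slope $\theta$. By Theorem~\ref{t:main} with $g=0$, each $\pi(t_{-1/b}(T^n\bw))$ lies in the semiopen interval with endpoints $\pi(t_{-1/b}(100D\bc_\theta))$ and $\pi(t_{-1/b}(011D\bc_\theta))$. Lifting to $\R$ and checking that no integer shift occurs (all values lie in the short range above), we get $t_{-1/b}(100D\bc_\theta)\le t_{-1/b}(T^n\bw)\le t_{-1/b}(011D\bc_\theta)$. Lemma~\ref{l:t-r} then translates this into the required $\precalt$ inequalities with $\bu=D\bc_\theta$.

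\emph{Uniqueness of $\bu$ and the inf/sup.} The endpoints in Theorem~\ref{t:main} are the endpoints of the smallest interval, since the interval has the minimal possible length $b^{-1}+b^{-2}-b^{-3}$ by \cite{LZ}. So they are the infimum and supremum of the orbit values. Via Lemma~\ref{l:t-r}, this gives $\inf^\alt T^n\bw=100D\bc_\theta$ and $\sup^\alt T^n\bw=011D\bc_\theta$. For any valid $\bu$, the interval $[100\bu,011\bu]$ has the same minimal length and contains the orbit, so it must coincide with this one. Hence $100\bu=100D\bc_\theta$, and so $\bu=D\bc_\theta$.

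The main obstacle is the reduction $g\equiv 0$ in the if direction, i.e.\ pinning down the representative of $\xi$ modulo the ambiguity $g/(b+1)$. I would first try to handle it by noting that $\pi(1/(b+1))$ is fixed by multiplication by $-b$, so the orbit of $\xi$ is the orbit of $\xi-g/(b+1)$ translated by $\pi(g/(b+1))$. I would then match the translated extremal points against the $\{0,1\}$-expansions, using injectivity.
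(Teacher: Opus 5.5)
\textbf{Gap in the ``if'' direction.} Your ``only if'' direction and your inf/sup and uniqueness-of-$\bu$ arguments are fine. For the bounds you can also apply Lemma~\ref{l:Dbounds} directly to $T^n\bw\in\cD$, with no lifting from $\R/\Z$.

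The problem is the last step of the ``if'' direction. The statement of Theorem~\ref{t:main} only gives $t_{-1/b}(\bw)=g/(b+1)+t_{-1/b}(\bw')$ for some $g\in\Z$ and $\bw'\in\cD$. It says nothing about the digits of $\bw$ itself. Your way of recovering $\bw=\bw'$ (``a comparison of closures of the orbits forces $\pi(g/(b+1))=0$'') is not an argument. Soft considerations cannot give it either. For $b=3$,
\[
t_{-1/3}((01)^\infty)-t_{-1/3}((10)^\infty)=\tfrac18+\tfrac38=\tfrac24,
\qquad
t_{-1/3}(1^\infty)-t_{-1/3}(0^\infty)=-\tfrac14,
\]
so nonzero $g$ genuinely occurs for pairs of (periodic) $\{0,1\}$-words. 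Any proof must therefore combine the aperiodicity of $\bw$ with a digit-level uniqueness statement for $(-b)$-expansions. One way: write $g/(b+1)=t_{-1/b}((-g)^\infty)$, reduce to $t_{-1/b}(\bw-\bw'+g^\infty)=0$ with digits in $\{g-1,g,g+1\}$, and treat the few admissible $g$ using uniqueness of expansions over $\{0,\dots,b-1\}$. As written, this step is missing.

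\textbf{Fix.} The paper avoids the issue by not passing through Theorem~\ref{t:main}. It applies Theorem~\ref{t:r} directly with $\bv=\bw$ and some $0<r<1/2$:
\begin{itemize}
\item By Lemma~\ref{l:t-r}, the real numbers $t_{-r}(T^n\bw)$ lie in $[t_{-r}(100\bu),t_{-r}(011\bu)]$, an interval of length $r+r^2-r^3$.
\item Theorem~\ref{t:r} then gives $w_i=-g+w'_i$ \emph{digitwise}, with $\bw'\in\cD$.
\item Since $\bw$ and $\bw'$ are both over $\{0,1\}$ and $\bw$ is nonconstant, $g=0$, so $\bw=\bw'\in\cD$.
\end{itemize}
Working with $t_{-r}$ on $\R$ rather than modulo one also removes your lifting arguments. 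It removes the need to prove $\xi$ irrational as well. Your justification of that via ``uniqueness of the expansion'' would itself need a pigeonhole argument: a rational value forces finitely many values $t_{-1/b}(T^n\bw)$, hence eventual periodicity by injectivity.
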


More generally, we prove the following result about $t_{-r}$.

\begin{theorem}\label{t:r}
Let $\bv=v_1v_2\dots$ be an aperiodic word over a finite set of integers and 
let $r$ be a real number satisfying $0<r<1$. Then the sequence 
$(t_{-r}(T^n\bv))_{n\ge 0}$ is contained in an interval of length $r+r^2-r^3$ 
if and only if there exist $g\in \Z$ and $\bw=w_1w_2\dotso\in \cD$ such that 
$v_i=-g+w_i$ for all $i\ge 1$. In this case, the endpoints of the interval 
are 
\[gr/(1+r)+t_{-r}(100D\bc_{\theta}),\quad gr/(1+r)+t_{-r}(011D\bc_{\theta}),\]
where $\theta$ denotes the slope of $\bw$, and the sequence is contained in a 
semiopen interval with these endpoints.  Furthermore, in this case, the 
sequence is contained in the open interval with these endpoints if and only 
if $\bw\notin \cC$. 
\end{theorem}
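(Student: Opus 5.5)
Write $x_n=t_{-r}(T^n\bv)$, so that $x_n=-r(v_{n+1}+x_{n+1})$, equivalently $x_{n+1}=-v_{n+1}-x_n/r$. If the $x_n$ lie in an interval $J$ of length $L=r+r^2-r^3<1$, the recursion forces strong constraints. The plan is first to reduce to a word over a two-letter alphabet, then to translate the interval condition into an order condition on shifts, and finally to identify the extremal words.

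\textbf{Reduction to $\{0,1\}$.} Shifting $\bv$ by a constant $-g$ changes every $x_n$ by the same amount $-g\cdot(-r)/(1+r)=gr/(1+r)$. This explains the $gr/(1+r)$ term in the endpoints. From $x_{n+1}+v_{n+1}=-x_n/r$, the values $-x_n/r$ lie in an interval of length $L/r<2$ when $r$ is small, and each such value is an integer translate of a point of $J$. Using $L<1$, I would show that the set of letters occurring in $\bv$ (from some point on, and then everywhere) is contained in two adjacent integers. If only one letter occurred infinitely often, $\bv$ would be eventually periodic, which is excluded. So after subtracting $g$ we may assume $\bv=\bw$ is a word over $\{0,1\}$.

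\textbf{Order condition.} Lemma \ref{l:t-r} is stated for $r<1/2$, so for general $0<r<1$ I would work directly with the alternate order together with explicit estimates. The key step is to show that the interval condition of length $r+r^2-r^3$ forces the following: whenever $w_{n+1}\ne w_{m+1}$, the tails must begin with $100$ and $011$ respectively. Indeed, $t_{-r}(1\bu)-t_{-r}(0\bu')=-r+r(\dots)$, and a difference of $r+r^2-r^3$ is attained exactly by the extremal pair $100\bu$, $011\bu$ (the common tail contributes nothing). Iterating this, I would show that the tails satisfy $100\bu\precalt T^n\bw\precalt 011\bu$ for a suitable $\bu$. This $\bu$ is obtained as a limit, using compactness: take $\inf^{\alt}$ and $\sup^{\alt}$ of the shifts and show that they share a common tail after their first three letters.

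\textbf{Combinatorial identification.} Next I show that the order condition above is equivalent to $\bw\in\cD$ with $\bu=D\bc_\theta$. First, the condition prevents both $00$ followed by certain patterns and $11$ mismatches. This forces every maximal block of equal letters, after a possible initial run, to have even length, so that $\bw=a^l D\bs$. Then, writing $D$ out explicitly, the condition on $D\bs$ with respect to $\precalt$ becomes Veerman's lexicographic condition $0\bu'\preclex T^n\bs\preclex 1\bu'$, since doubling converts the alternate order on $D\bs$ into the lexicographic order on $\bs$ after a sign flip. By the characterization recalled in Section~\ref{s:pos}, $\bs$ is Sturmian, and the extremal word is $\bu'=\bc_\theta$. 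Conversely, for $\bw\in\cD$ the known bounds $0\bc_\theta\preclex T^n\bs\preclex1\bc_\theta$ translate back into the stated inequalities, which gives the interval with the claimed endpoints. Semi-openness and the open-interval criterion come from the observation that an endpoint is attained exactly when some shift $T^n\bw$ equals $011D\bc_\theta$ or $100D\bc_\theta$, which is precisely the definition of $\cC$. Only one of the two can occur, because $\bc_\theta$ has a unique special factorization: $0\bc$ and $1\bc$ cannot both be shifts of one Sturmian word when $\bc$ is characteristic.

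\textbf{Main obstacle.} I expect the main obstacle to be the analytic step for $r$ close to $1$, where Lemma \ref{l:t-r} fails and the sign of $t_{-r}(\bv)-t_{-r}(\bw)$ is not determined by the first differing letter. There I would first try a direct potential argument. Assume the interval has length $\le L$ and use the recursion twice, $x_{n+2}=-v_{n+2}+v_{n+1}/r+x_n/r^2$, to compare pairs of indices. This shows that any occurrence of $010$ or $101$ inside a nonconforming context produces a spread exceeding $L$. That reduces the problem to the combinatorial statement above.
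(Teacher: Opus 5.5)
Your proposal has a genuine gap in the ``only if'' direction: it never actually derives anything combinatorial from the interval hypothesis.

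\begin{itemize}
\item The claim that $w_{n+1}\ne w_{m+1}$ forces the two tails to begin with $100$ and $011$ is false. Already $D\bc_\theta$ has shifts beginning with $00$ and with $11$.
\item The assertion that $\inf\nolimits^{\alt}$ and $\sup\nolimits^{\alt}$ of the orbit agree after three letters is essentially the content of the theorem, and you give no argument for it.
\item For $r\ge 1/2$, $t_{-r}$ is not monotone for $\precalt$ on arbitrary words over $\{0,1\}$. So you cannot convert the hypothesis into an order condition the way Lemma~\ref{l:t-r} does for $r<1/2$.
\item Your two-letter reduction has the same defect. From $v_{n+1}=-x_n/r-x_{n+1}$ you only get that the letters lie in an interval of length $(1+r^{-1})(r+r^2-r^3)=1+2r-r^3$, which exceeds $2$ once $r>(\sqrt{5}-1)/2$.
\end{itemize}

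The idea you are missing is to argue with pairs of factors rather than with orders. By \eqref{eq:triv}, any two factors $\bu,\bu'$ of length $k$ satisfy $t_{-r}(\bu)-t_{-r}(\bu')\le(1+r^k)(B-A)$. Evaluating this on explicit pairs excludes each pair for every $0<r<1$. The pairs are: a largest letter followed by a smaller one versus a smallest letter followed by a larger one; $010$ versus $10a$; $0111$ versus $1000$; $0110^{z-1}$ versus $10^z1$ for odd $z$; and finally $011(D\bu)1$ versus $100(D\bu)0$. This successively gives two adjacent letters, even block lengths, then $\bw\in\cP$. It also shows the undoubled word has no pair of factors $01\bu1$, $10\bu0$, so it lies in $\cS$ by Lemma~\ref{l:S}.

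Your combinatorial identification is also inaccurate even if an order condition were available.
\begin{itemize}
\item By Lemma~\ref{l:P}, the even shifts $D(T^k\bs)$ impose nothing.
\item An odd shift $s_{k+1}D(T^{k+1}\bs)$ only gives $0\bu'\preclex T^{k+1}\bs$ when $s_{k+1}=1$, and $T^{k+1}\bs\preclex 1\bu'$ when $s_{k+1}=0$.
\item These one-sided conditions are not Veerman's two-sided condition. What they rule out is exactly a pair $10\bx0$, $01\bx1$, so the relevant criterion is Lemma~\ref{l:S}, not Corollary~\ref{c:Veerman}.
\end{itemize}

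The ``if'' direction and the endpoint and semi-openness statements also have gaps.
\begin{itemize}
\item For all $0<r<1$, not only $r<1/2$, you need that $t_{-r}$ is strictly increasing for $\precalt$ on words of $\cD$ of a common slope. This is Lemma~\ref{l:Dtr}, obtained from \eqref{eq:P2} and the alternating-sign argument of Lemma~\ref{l:tr} for balanced words.
\item Your fallback for $r$ near $1$ addresses only the other direction.
\item To identify these as the endpoints you also need that the orbit is not contained in any shorter interval. The paper takes this from \cite[Theorem~1.2]{LZ}, and your proposal does not address it.
\end{itemize}
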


For alternating geometric progressions with rational common ratio $-p/q$, 
where $p>q>1$ are integers, we showed in \cite[Theorem 1.1]{LZ} that for any 
real number $\xi\neq 0$, the sequence $(\pi(\xi (-p/q)^n))_{n\ge 0}$ is not 
contained in any interval of $\R/\Z$ of length less than $(r+r^2-r^3)/q$, 
where $r=q/p$. In subsequent work, we will use Theorem \ref{t:r} to study the 
question whether the sequence can lie in an interval of length equal to 
$(r+r^2-r^3)/q$. The analogous question for powers of positive rational 
numbers was studied by several authors (\cite{FLP}, \cite{Bugeaud-linear}, 
\cite{Dubickas-small}, \cite{Dubickas-large}). 

This paper is organized as follows. In Section~\ref{s:pre}, we present some 
preliminary results. In Section~\ref{s:pos}, we state and prove an analogue 
of Theorem \ref{t:r} for $t_r$ and deduce the theorems of Veerman and 
Bugeaud--Dubickas. This serves as a warm-up for the more involved proofs, 
given in Section~\ref{s:proof}, of the results stated in the introduction. 

\subsection*{Acknowledgments}
This work was partially supported by the National Natural Science Foundation 
of China (grant numbers 12125107, 12271037, 12288201) and the Chinese Academy 
of Sciences Project for Young Scientists in Basic Research (grant number 
YSBR-033). 

\section{Preliminaries}\label{s:pre}

The following characterization of Sturmian words as balanced aperiodic words 
was proved by Morse and Hedlund (see \cite{MH2} or \cite[Theorem 
2.1.13]{Lothaire}). 

\begin{lemma}\label{l:Sturm}
Let $\bs$ be an aperiodic word over the alphabet $\{0,1\}$. Then $\bs$ is 
Sturmian if and only if there does not exist any finite word $\bu$ such that 
$0\bu 0$ and $1\bu 1$ are subwords of $\bs$.  
\end{lemma}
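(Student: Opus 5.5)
The plan is to prove the two implications separately, working throughout with the weight $h(\bu)$ of a finite word $\bu$, i.e.\ its number of $1$'s. For a word $\bs=s_1s_2\dots$ we write $h_n=s_1+\dots+s_n$, with $h_0=0$.

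\emph{Sturmian $\Rightarrow$ balanced.} Suppose $\bs$ is given by \eqref{eq:S1}; the case \eqref{eq:S2} is identical with ceilings. Write $x=(k+1)\theta+\rho$. The factor of length $m$ starting at position $k+1$ then has weight
\[
\lfloor x+m\theta\rfloor-\lfloor x\rfloor\in\{\lfloor m\theta\rfloor,\lceil m\theta\rceil\}.
\]
So any two factors of the same length have weights differing by at most $1$. If $0\bu0$ and $1\bu1$ were both factors, they would have the same length $\lvert\bu\rvert+2$ and weights differing by exactly $2$, a contradiction.

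\emph{Balanced aperiodic $\Rightarrow$ Sturmian.} Assume $\bs$ is aperiodic and has no pair of factors $0\bu0$, $1\bu1$.
\begin{enumerate}
\item I first show the standard reduction: any two factors $\bv,\bv'$ of the same length satisfy $\lvert h(\bv)-h(\bv')\rvert\le1$. Suppose not, and take a counterexample of minimal length. Stripping the first letter of both words gives factors of smaller length, so by minimality their weights differ by at most $1$; the same holds after stripping the last letter. This forces the first letters to differ and the last letters to differ, and the middle parts to have equal weight. Iterating the argument inward produces a common middle, i.e.\ factors $0\bu0$ and $1\bu1$, which is excluded.
\item Consequently, for each $m$ all length-$m$ factors have weight in $\{k_m,k_m+1\}$. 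This gives $\lvert h_{n+m}-h_n-h_m\rvert\le1$, so by a Fekete-type argument $\theta=\lim h_n/n$ exists. It also gives $\lvert h(\bv)-\lvert\bv\rvert\theta\rvert<1$ for every factor $\bv$; strictness follows from comparing with concatenations of many length-$m$ factors.
\item I then show that $\theta$ is irrational. Suppose $\theta=p/q$. Every factor of length $q$ has weight $p-1$, $p$ or $p+1$, and by the bound in (b) only two adjacent values occur. If some factor of length $q$ has weight $\ne p$, the balance inequality applied to long concatenations forces all later blocks of length $q$ eventually to have weight exactly $p$. Then $s_{n+q}=s_n$ for all large $n$, contradicting aperiodicity. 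This is the step I expect to be the main obstacle, since the deviation estimates must be handled carefully.
\item Finally, set $\alpha_n=h_n-n\theta$ for $n\ge0$. By (b), $\alpha_m-\alpha_n\in(-1,1)$, so $I=[\inf_n\alpha_n,\sup_n\alpha_n]$ has length at most $1$. Since $\theta$ is irrational, the $\alpha_n$ are pairwise distinct, so at most one endpoint of $I$ can be attained with $I$ of length exactly $1$.
\item Choose $\rho$ so that every $\alpha_n$ lies in $(-\rho-1+\theta,\,-\rho+\theta]$, respectively in the mirror half-open interval when the other endpoint is the attained one. Then $h_n=\lceil (n+1)\theta+\rho\rceil-\lceil\theta+\rho\rceil$, respectively the analogous expression with floors, for all $n$. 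Taking differences $h_n-h_{n-1}$ and shifting $\rho$ gives \eqref{eq:S2} or \eqref{eq:S1}. This explains why both forms appear in the definition of Sturmian word.
\end{enumerate}
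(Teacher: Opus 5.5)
The paper does not prove this lemma; it quotes it from Morse--Hedlund and from Lothaire (Theorem 2.1.13). Your easy direction is the same floor computation the paper uses in Lemma \ref{l:ineq}. Your outline for the converse is the standard proof in that reference: a minimal unbalanced pair yields $0\bu0$ and $1\bu1$; the slope exists by subadditivity; aperiodicity forces it to be irrational; and a half-open unit interval containing all $h_n-n\theta$ gives the mechanical representation. The approach is sound, but three points need repair as written.

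First, in (b) you cannot get the strict bound $\lvert h(\bv)-\lvert\bv\rvert\theta\rvert<1$ from concatenation arguments alone. The word $10^\infty$ contains no pair $0\bu0$, $1\bu1$ and has slope $0$, yet its factor $1$ gives equality. At that stage only the non-strict bound holds, and that is all (c) uses. Your argument in (c) is correct: along each residue class modulo $q$, at most one length-$q$ block can have the exceptional weight. Strictness then follows from irrationality, since $\lvert\bv\rvert\theta\notin\Z$. It is this strictness, not the distinctness of the $\alpha_n$, that prevents both endpoints of $I$ from being attained when $I$ has length $1$ in (d).

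Second, ``iterating inward'' in (a) must be made explicit, because the middle words of $1\bu1$ and $0\bu'0$ are not themselves a counterexample. Let $d_i$ be the difference of the weights of the length-$i$ prefixes of $\bu$ and $\bu'$. Minimality applied to the proper prefixes of $1\bu1$ and $0\bu'0$ gives $\lvert 1+d_i\rvert\le 1$. Applied to their proper suffixes, using $h(\bu)=h(\bu')$, it gives $\lvert 1-d_i\rvert\le 1$. Hence $d_i=0$ for all $i$, and $\bu=\bu'$.

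Third, the formula in (e) does not match the interval you chose; both the rounding and the sign of $\rho$ are off. If all $\alpha_n$ lie in $(c-1,c]$, then $h_n=\lfloor n\theta+c\rfloor$, which gives \eqref{eq:S1} with $\rho=c-\theta$. If instead $\alpha_n\in[c,c+1)$, then $h_n=\lceil n\theta+c\rceil$, which gives \eqref{eq:S2}. Since both forms are allowed in the definition, the conclusion is unaffected.
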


The ``only if'' part of Lemma \ref{l:Sturm} can be strengthened as follows. 

\begin{lemma}\label{l:Sturm2}
Let $\bs$ and $\bs'$ be Sturmian words over $\{0,1\}$ of the same slope. Then 
there does not exist any finite word $\bu$ such that $0\bu 0$ is a subword of 
$\bs$ and $1\bu 1$ is a subword of $\bs'$. 
\end{lemma}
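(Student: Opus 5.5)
We need to prove Lemma \ref{l:Sturm2}: Sturmian words $\bs$, $\bs'$ of the same slope $\theta$ cannot contain $0\bu0$ and $1\bu1$ respectively. The plan is to use the mechanical representation \eqref{eq:S1}/\eqref{eq:S2} and count ones. If $\bs$ is given by \eqref{eq:S1} with parameter $\rho$, then a factor of $\bs$ of length $m$ starting at position $n$ has weight (number of ones)
\[\lfloor (n+m)\theta+\rho\rfloor-\lfloor n\theta+\rho\rfloor\in\{\lfloor m\theta\rfloor,\lfloor m\theta\rfloor+1\}.\]
This holds because $\lfloor x+m\theta\rfloor-\lfloor x\rfloor$ lies in $\{\lfloor m\theta\rfloor,\lceil m\theta\rceil\}$, and $\lceil m\theta\rceil=\lfloor m\theta\rfloor+1$ since $\theta$ is irrational and $m\ge1$. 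The same computation with ceilings handles \eqref{eq:S2}. So every factor of length $m$ in any Sturmian word of slope $\theta$ has weight in $\{\lfloor m\theta\rfloor,\lfloor m\theta\rfloor+1\}$, a set that depends only on $m$ and $\theta$, not on $\rho$ or on which of the two forms is used.

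Now suppose $0\bu0$ is a factor of $\bs$ and $1\bu1$ is a factor of $\bs'$, and let $m=|\bu|+2\ge 2$. Writing $|\bu|_1$ for the number of ones in $\bu$, the two factors have weights $|\bu|_1$ and $|\bu|_1+2$. Both are factors of length $m$ of Sturmian words of slope $\theta$, so both weights lie in the same two-element set of consecutive integers. Their difference is $2$, which is a contradiction.

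There is no real obstacle. The one point that needs care is the weight formula: one must check that the shift $\rho$ and the choice between floor and ceiling forms do not affect the set of possible weights, and that the two values $\lfloor m\theta\rfloor$ and $\lceil m\theta\rceil$ differ by exactly $1$, which uses the irrationality of $\theta$.
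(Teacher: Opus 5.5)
Your proof is correct and is essentially the paper's ``more direct'' route: the weight set $\{\lfloor m\theta\rfloor,\lfloor m\theta\rfloor+1\}$ you derive amounts to Lemma~\ref{l:ineq} (the weight $w$ of any length-$m$ factor satisfies $\lvert m\theta-w\rvert<1$), and the weights of $0\bu0$ and $1\bu1$ differ by $2$, which gives the contradiction. The paper's primary one-line argument instead combines the Morse--Hedlund balance property (Lemma~\ref{l:Sturm}) with the fact that Sturmian words of the same slope have the same set of finite factors.
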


\begin{proof}
This follows immediately from the ``only if'' part of Lemma \ref{l:Sturm} and 
the fact that $\bs$ and $\bs'$ have the same set of finite subwords 
\cite[Proposition 2.1.18(i)]{Lothaire}. More directly, it follows from Lemma 
\ref{l:ineq} below. 
\end{proof}

\begin{lemma}\label{l:ineq}
Let $s_1s_2\dots$ be a Sturmian word over $\{0,1\}$ of slope $\theta$. Then 
$\lvert \theta k-\sum_{i={n}}^{n+k-1}s_i\rvert <1$ for all $n,k\ge 1$. 
\end{lemma}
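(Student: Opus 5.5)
The plan is to sum the two-sided formulas \eqref{eq:S1} and \eqref{eq:S2} directly. Both telescope, and the whole statement then reduces to the elementary fact that a difference of two floors, or of two ceilings, of nearby reals is pinned down to within~$1$.

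Fix $n,k\ge 1$ and suppose first that $\bs$ is given by \eqref{eq:S1} with irrational $\theta$ and some $\rho$. Summing over $i=n,\dots,n+k-1$ telescopes to
\[
\sum_{i=n}^{n+k-1}s_i=\lfloor (n+k)\theta+\rho\rfloor-\lfloor n\theta+\rho\rfloor .
\]
Put $x=n\theta+\rho$, so $(n+k)\theta+\rho=x+k\theta$. Writing $\lfloor y\rfloor=y-\{y\}$ gives
\[
\theta k-\sum_{i=n}^{n+k-1}s_i=\{x+k\theta\}-\{x\}.
\]
Both fractional parts lie in $[0,1)$, so their difference lies in $(-1,1)$. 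That is exactly the claimed strict inequality.

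If instead $\bs$ is given by \eqref{eq:S2}, the same telescoping yields $\lceil x+k\theta\rceil-\lceil x\rceil$. Writing $\lceil y\rceil=y+\{-y\}$ gives
\[
\theta k-\sum_{i=n}^{n+k-1}s_i=\{-x\}-\{-x-k\theta\}\in(-1,1),
\]
again.

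There is no real obstacle. The only point to watch is that the strictness comes from fractional parts lying in the half-open interval $[0,1)$, so no irrationality is even needed for this step. One should also note that $\theta$ is, by definition, the same parameter appearing in \eqref{eq:S1} or \eqref{eq:S2}. This is consistent with the slope: dividing the telescoped sum by $k$ and letting $k\to\infty$ recovers~$\theta$.
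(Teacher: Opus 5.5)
Your proposal is correct and is essentially the paper's proof: you telescope \eqref{eq:S1} or \eqref{eq:S2} and write the result as a difference of two fractional parts in $[0,1)$. In the ceiling case the paper likewise uses $y-\lceil y\rceil=-\{-y\}$, which is equivalent to your substitution $\lceil y\rceil=y+\{-y\}$.
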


\begin{proof}
This follows from an easy computation. In the case \eqref{eq:S1}, we have 
\[\theta k-\sum_{i={n}}^{n+k-1}s_i=\theta k -(\lfloor (n+k)\theta+\rho\rfloor - \lfloor n\theta+\rho\rfloor)
=\{ (n+k)\theta+\rho\} - \{ n\theta+\rho\}\in (-1,1).
\]
The same holds in the case \eqref{eq:S2} after replacing 
$\lfloor\cdot\rfloor$ by $\lceil\cdot\rceil$ and $\{\cdot\}$ by 
$-\{-(\cdot)\}$. 
\end{proof}

\begin{lemma}
Let $\theta\in (0,1)$ be irrational. Then $0\bc_\theta$ and $1\bc_\theta$ are 
Sturmian. 
\end{lemma}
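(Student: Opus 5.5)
The plan is to realize $0\bc_\theta$ and $1\bc_\theta$ directly in one of the two forms \eqref{eq:S1} or \eqref{eq:S2}, by shifting the index by one and choosing $\rho$ suitably. Write $c_n=\lfloor (n+1)\theta\rfloor-\lfloor n\theta\rfloor$ for $n\ge 1$, and look for $\rho$ with $s_1\in\{0,1\}$ prescribed and $s_{n+1}=c_n$ for $n\ge 1$.

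Take $\rho=-\theta$ in \eqref{eq:S1}. Then
\[s_n=\lfloor n\theta\rfloor-\lfloor (n-1)\theta\rfloor.\]
For $n\ge 2$ this gives $s_n=c_{n-1}$. For $n=1$ it gives $s_1=\lfloor\theta\rfloor-\lfloor 0\rfloor=0$. So $s_1s_2\dots=0\bc_\theta$, and $0\bc_\theta$ is Sturmian.

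For $1\bc_\theta$, use \eqref{eq:S2} with the same $\rho=-\theta$:
\[s_n=\lceil n\theta\rceil-\lceil (n-1)\theta\rceil.\]
For $n\ge 2$ we have $(n-1)\theta\notin\Z$ by irrationality, so $c_{n-1}$ equals this expression by the ceiling formula for $c$ given in the definition. For $n=1$ we get $s_1=\lceil\theta\rceil-\lceil 0\rceil=1$, since $0<\theta<1$. Hence $s_1s_2\dots=1\bc_\theta$ is Sturmian.

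The only delicate point, and the step to check carefully, is the equality of the floor and ceiling expressions for $c_n$ when $n\ge 1$. This holds because $n\theta$ and $(n+1)\theta$ are non-integers, so $\lceil x\rceil=\lfloor x\rfloor+1$ at both points and the $+1$'s cancel. The difference between the two words lives entirely at the index $n=1$, where the argument $0\cdot\theta$ is an integer and floor and ceiling agree rather than differing by one.
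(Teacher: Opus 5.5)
Your proof is correct and is the paper's argument: the paper says only that the lemma is clear by taking $\rho=-\theta$ in \eqref{eq:S1} and \eqref{eq:S2}. You carry this out explicitly, with the first letters $0$ and $1$ coming from $\lfloor\theta\rfloor=0$ and $\lceil\theta\rceil=1$, and the remaining letters matching $\bc_\theta$ via its floor and ceiling formulas.
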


\begin{proof}
This is clear by taking $\rho=-\theta$ in \eqref{eq:S1} and \eqref{eq:S2}. 
\end{proof}

\begin{lemma}\label{l:tr0}
Let $0<r<1/2$. Then $t_r$ restricts to an order-preserving injection 
$(\{0,1\}^\N,\preclex)\to (\R,\le)$. 
\end{lemma}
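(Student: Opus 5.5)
We need to show two things: that $t_r$ is strictly increasing with respect to $\preclex$ on $\{0,1\}^\N$, and that it is injective. Injectivity follows from strict monotonicity, since $\preclex$ is a total order. So it suffices to show that $\bv\precnlex\bw$ implies $t_r(\bv)<t_r(\bw)$.

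Let $\bv=v_1v_2\dots$ and $\bw=w_1w_2\dots$ be distinct words over $\{0,1\}$ with $\bv\precnlex\bw$. Let $k\ge1$ be the first index with $v_k\ne w_k$, so that $v_k=0$ and $w_k=1$. The terms with index below $k$ cancel, and we get
\[t_r(\bw)-t_r(\bv)=r^k+\sum_{i>k}(w_i-v_i)r^i.\]
Each difference $w_i-v_i$ lies in $\{-1,0,1\}$, so the tail is bounded in absolute value by
\[\sum_{i>k} r^i=\frac{r^{k+1}}{1-r}.\]
Hence
\[t_r(\bw)-t_r(\bv)\ge r^k-\frac{r^{k+1}}{1-r}=r^k\cdot\frac{1-2r}{1-r}>0,\]
where the last inequality uses $0<r<1/2$.

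This gives strict monotonicity, and therefore both order preservation and injectivity. The only point needing care is the strictness of the final inequality, which is exactly where the hypothesis $r<1/2$ is used; at $r=1/2$ the tail bound is attained and injectivity fails, as in the identity $.0111\dots=.1000\dots$.
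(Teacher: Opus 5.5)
Your proof is correct and is essentially the paper's argument: take the first index where the words differ and bound the tail by $\sum_{i>k} r^i$, getting $r^k(1-2r)/(1-r)>0$. Deducing injectivity from strict monotonicity under the total order $\preclex$ is fine, and your remark that the hypothesis $r<1/2$ is exactly what makes the final inequality strict is accurate.
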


\begin{proof}
Let $\bs=s_1s_2\dots$, $\bs'=s'_1s'_2\dots\in \{0,1\}^\N$ with $\bs\precnlex 
\bs'$. Then there exists $n\ge 1$ such that $s_i=s'_i$ for all $i<n$ and 
$s_n<s'_n$. Thus
\[t_r(\bs')-t_r(\bs)=\sum_{i=1}^\infty (s'_i-s_i)r^i\ge r^n-\sum_{i=n+1}^\infty r^{i}=r^n(1-\frac{r}{1-r})>0.\] 
\end{proof}

\begin{lemma}\label{l:tr}
Let $0<r<1$ and let $\bs$ and $\bs'$ be Sturmian words over $\{0,1\}$ of the 
same slope. Then $\lvert t_r(\bs')-t_r(\bs)\rvert \le r$. Moreover, if 
$\bs\precnlex \bs'$, then $t_r(\bs)<t_r(\bs')$. 
\end{lemma}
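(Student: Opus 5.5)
We compare the two words position by position, using Lemma~\ref{l:ineq} (via Lemma~\ref{l:Sturm2}) to control the letters on which they differ. Write $\bs=s_1s_2\dots$ and $\bs'=s'_1s'_2\dots$, and put $d_i=s'_i-s_i\in\{-1,0,1\}$, so that $t_r(\bs')-t_r(\bs)=\sum_{i\ge1} d_ir^i$.

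The key structural claim is that the nonzero $d_i$ alternate in sign. Suppose $i<j$ are consecutive indices with $d_i\neq 0\neq d_j$. Then $s_k=s'_k$ for $i<k<j$, so with $\bu=s_{i+1}\dots s_{j-1}$ the words $s_i\bu s_j$ and $s'_i\bu s'_j$ are subwords of $\bs$ and $\bs'$ respectively. If $d_i=d_j=1$, then $0\bu0$ is a subword of $\bs$ and $1\bu1$ a subword of $\bs'$, which contradicts Lemma~\ref{l:Sturm2}. The case $d_i=d_j=-1$ is symmetric, with the roles of $\bs$ and $\bs'$ swapped. Hence consecutive nonzero $d_i$ have opposite signs.

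For the first assertion, let the nonzero indices be $i_1<i_2<\cdots$, a finite or infinite list. Then
\[
\sum_{i\ge1} d_ir^i=\pm\bigl(r^{i_1}-r^{i_2}+r^{i_3}-\cdots\bigr).
\]
Since the powers $r^{i_1}>r^{i_2}>\cdots$ are strictly decreasing and tend to $0$, the alternating-series bound gives absolute value at most $r^{i_1}\le r$. This remains valid when the list is finite.

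For the second assertion, assume $\bs\precnlex\bs'$. The first differing index is then $i_1$, and $d_{i_1}=1$. The alternating-series estimate gives
\[
r^{i_1}-r^{i_2}+r^{i_3}-\cdots\ \ge\ r^{i_1}-r^{i_2}>0
\]
when there are at least two nonzero terms, and the sum equals $r^{i_1}>0$ when there is only one. The partial sums converge because the series is absolutely convergent. Hence $t_r(\bs)<t_r(\bs')$.

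The main obstacle is the alternation claim. Everything else is the elementary alternating-series inequality; the only subtlety is checking that strict positivity survives in the infinite case. That holds because the tail after the second term is bounded by its first term, so the sum is at least $r^{i_1}-r^{i_2}$.
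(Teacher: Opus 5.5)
Your proof is correct and is essentially the paper's argument: Lemma~\ref{l:Sturm2} forces the nonzero differences $s'_n-s_n$ to alternate in sign, and the alternating-series estimate with leading term $r^{i_1}\le r$ (positive when $\bs\precnlex\bs'$) gives both assertions. One small wording point: for the strict inequality, the fact you need is that $-r^{i_2}+r^{i_3}-\cdots\ge -r^{i_2}$, equivalently that the tail $r^{i_3}-r^{i_4}+\cdots$ is nonnegative, rather than that the tail is bounded above by its first term.
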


\begin{proof}
We may assume $\bs\precnlex \bs'$. Write $\bs=s_1s_2\dots$ and 
$\bs'=s'_1s'_2\dots$. Then 
\[t_r(\bs')-t_r(\bs)=\sum_{n=1}^\infty (s'_n-s_n)r^n.\]
By Lemma \ref{l:Sturm2}, the sequence $(s'_n-s_n)_{n\ge 1}$ with values in 
$\{-1,0,1\}$ alternates in sign after removing all zeroes. Moreover, the 
leading nonzero term is positive by the assumption $\bs\precnlex \bs'$. Thus 
$0<t_r(\bs')-t_r(\bs)\le r$. 
\end{proof}

The notation $t_r$ extends naturally to finite words. Let $\Sigma$ be a 
finite set of integers. For a finite word $\bu=u_1\dots u_k$ over $\Sigma$, 
we write $t_r(\bu)=\sum_{i=1}^k u_i r^i$. Then, for any finite or infinite 
word $\bv$ over $\Sigma$, we have 
\[t_r(\bu\bv)-r^kt_r(\bv)=t_r(\bu).\]
In particular, for any infinite word $\bw$ over $\Sigma$, if $T^n\bw=\bu 
T^{n+k}\bw$ and $T^m\bw=\bu' T^{m+k}\bw$, where $\bu$ and $\bu'$ are words of 
length $k$, then 
\begin{equation}\label{eq:triv} 
(t_r(T^n\bw)-t_r(T^m\bw))-r^k(t_r(T^{n+k}\bw)-t_r(T^{m+k}\bw))=t_r(\bu)-t_r(\bu').
\end{equation}

\section{The positive case}\label{s:pos}

As mentioned in the introduction, the following analogue of Theorem \ref{t:r} 
was proved by Veerman \cite[Theorem 2.1]{Veerman} in the case $r=1/2$ and by 
Bugeaud and Dubickas \cite[Theorem 2.1]{BD} in the case $r=1/b$, where $b\ge 
2$ is an integer. 

\begin{prop}\label{p:pos}
Let $\bv=v_1v_2\dots$ be an aperiodic word over a finite set of integers and 
let $r$ be a real number such that $0<r<1$. Then the sequence 
$(t_{r}(T^n\bv))_{n\ge 0}$ is contained in an interval of length $r$ if and 
only if there exist an integer $g$ and a Sturmian word $\bw=w_1w_2\dots$ over 
the alphabet $\{0,1\}$ such that $v_i=g+w_i$ for all $i\ge 1$. In this case, 
the endpoints of the interval are 
\[gr/(1-r)+t_{r}(0\bc_{\theta}),\quad gr/(1-r)+t_{r}(1\bc_{\theta}),\]
where $\theta$ denotes the slope of $\bw$, and the sequence is contained in a 
semiopen interval with these endpoints.  Furthermore, in this case, the 
sequence is contained in the open interval with these endpoints if and only 
if none of the words $T^n\bw$, $n\ge 1$ is characteristic Sturmian. 
\end{prop}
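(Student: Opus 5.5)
We first reduce to the case $g=0$. If $v_i=g+w_i$ for all $i$, then $t_r(T^n\bv)=gr/(1-r)+t_r(T^n\bw)$. So it suffices to treat words over $\{0,1\}$, and everything below is stated for $\bw$.

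\emph{``If'' direction.} Let $\bw$ be Sturmian of slope $\theta$. The key claim is that every Sturmian word $\bs$ of slope $\theta$ satisfies $0\bc_\theta\preclex \bs\preclex 1\bc_\theta$.
\begin{enumerate}
\item To prove the claim, write $\bs$ as a lower (resp.\ upper) mechanical word as in \eqref{eq:S1} (resp.\ \eqref{eq:S2}). Put $x=\{\theta+\rho\}$ for \eqref{eq:S1}, and the analogous quantity for \eqref{eq:S2}. The word $s_1s_2\dots$ is then a coding of the orbit of $x$ under rotation by $\theta$.
\item This coding is lexicographically nondecreasing in $x$. The extreme values $x\to 0$ from the right and $x\to 1$ from the left are attained exactly by the lower word with $\rho=-\theta$, namely $0\bc_\theta$, and the upper word with $\rho=-\theta$, namely $1\bc_\theta$.
\item Given the claim, apply Lemma \ref{l:tr} to the pairs $(0\bc_\theta,T^n\bw)$ and $(T^n\bw,1\bc_\theta)$. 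These are Sturmian of the same slope, since $T$ preserves Sturmian words. This gives $t_r(0\bc_\theta)\le t_r(T^n\bw)\le t_r(1\bc_\theta)$.
\item The length of this interval is $t_r(1\bc_\theta)-t_r(0\bc_\theta)=r$.
\end{enumerate}

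\emph{Equality cases.} By the strict part of Lemma \ref{l:tr}, an endpoint is attained if and only if $T^n\bw\in\{0\bc_\theta,1\bc_\theta\}$ for some $n\ge 0$. This happens if and only if $T^{n+1}\bw=\bc_\theta$, so the open-interval criterion follows. Both endpoints cannot be attained. Suppose $T^m\bw=0\bc_\theta$ and $T^n\bw=1\bc_\theta$. Then $m\neq n$, and shifting gives $T^{|n-m|}\bc_\theta=\bc_\theta$, contradicting aperiodicity. Hence the orbit lies in a semiopen interval with these endpoints.

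\emph{``Only if'' direction.} Assume the orbit lies in an interval of length $r$. We use $t_r(T^n\bv)=rv_{n+1}+rt_r(T^{n+1}\bv)$.
\begin{enumerate}
\item If $v_{m+1}-v_{n+1}\ge 2$ for some $m,n$, the difference $t_r(T^m\bv)-t_r(T^n\bv)$ is at least $2r-r\cdot r>r$, a contradiction. So $\bv$ takes values in $\{g,g+1\}$ for some $g$.
\item Both values occur, because $\bv$ is aperiodic. Thus $\bw=\bv-g$ is an aperiodic word over $\{0,1\}$.
\item If $0\bu0$ and $1\bu1$ occur at positions $m$ and $n$ with $|\bu|=k$, then \eqref{eq:triv} gives
\[t_r(T^n\bw)-t_r(T^m\bw)=t_r(1\bu1)-t_r(0\bu0)+r^{k+2}\bigl(t_r(T^{n+k+2}\bw)-t_r(T^{m+k+2}\bw)\bigr).\]
The first term equals $r+r^{k+2}$, and the second is at least $-r^{k+3}$. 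So the difference is at least $r+r^{k+2}-r^{k+3}>r$, a contradiction.
\item Hence $\bw$ is balanced, and Lemma \ref{l:Sturm} shows that $\bw$ is Sturmian.
\end{enumerate}

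\emph{The interval is uniquely determined.} Every finite prefix of $0\bc_\theta$ and of $1\bc_\theta$ occurs as a subword of $\bw$, since Sturmian words of the same slope share their subwords. Therefore $t_r(0\bc_\theta)$ and $t_r(1\bc_\theta)$ are limit points of the orbit. Any interval of length $r$ containing the orbit must then contain both of them, so it has exactly these endpoints.

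\emph{Main obstacle.} I expect the main difficulty to be the lexicographic extremality of $0\bc_\theta$ and $1\bc_\theta$. I would prove it via the monotone dependence of mechanical words on the intercept, handling the boundary cases where $\{n\theta+\rho\}$ hits $0$ separately for the floor and ceiling conventions.
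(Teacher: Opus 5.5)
Your proof is correct. The ``only if'' half is essentially the paper's argument: the paper writes your first step as $(1+r)(B-A)\ge t_r(h)-t_r(g)$ via \eqref{eq:triv}, and the balance step is identical. The ``if'' half, however, takes a different route in two places.

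\textbf{Containment.} You first prove the lexicographic extremality $0\bc_\theta\preclex\bs\preclex1\bc_\theta$ by monotonicity of rotation codings in the intercept, and then apply the strict part of Lemma~\ref{l:tr}. The paper uses no order information at this point. It combines only the bound $\lvert t_r(\bs')-t_r(\bs)\rvert\le r$ from Lemma~\ref{l:tr} with $t_r(1\bc_\theta)=t_r(0\bc_\theta)+r$. Then $t_r(T^n\bw)$ lies within $r$ of two points exactly $r$ apart, so it lies between them.

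This sidesteps the obstacle you identify. In particular, you must compare lower mechanical words with $1\bc_\theta$ and upper ones with $0\bc_\theta$. Monotonicity within each family does not give this by itself. It needs either a limiting argument in the intercept (using that $\preclex$ is closed) or a detour through an intercept whose orbit avoids the discontinuity. The paper's trick also lets it derive Veerman's extremality (Corollary~\ref{c:Veerman}) as a consequence of the proposition, rather than using it as an input.

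\textbf{Uniqueness of the interval.} Here your argument is more self-contained than the paper's. Prefixes of $0\bc_\theta$ and $1\bc_\theta$ occur in $\bw$, so both candidate endpoints lie in the closure of the orbit, and they are exactly $r$ apart. The paper instead imports this from \cite[Proposition 2.3]{LZ}, which says the orbit lies in no shorter interval. Your argument needs only the shared-factor property of Sturmian words of a given slope, which the paper already uses in Lemma~\ref{l:Sturm2}.
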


Veerman \cite[Remark following Lemma 2.4]{Veerman} already noted that (in the 
case $\bv$ over $\{0,1\}$) the first assertion of Proposition \ref{p:pos} 
holds for $0<r<1$ and the second assertion holds for $0<r\le 1/2$. We will 
see that the restriction to $r\le 1/2$ is unnecessary.

\begin{proof}
Assume that $(t_{r}(T^n\bv))_{n\ge 0}$ is contained in an interval $[A,B]$ of 
length~$r$. Let $h =\max_{n\ge 1} v_n$ and $g=\min_{n\ge 1} v_n$. Since $\bv$ 
is aperiodic, we have $h>g$. By \eqref{eq:triv}, 
\[2r>(1+r)(B-A)\ge 
t_r(h)-t_r(g)=(h-g)r.\] Thus $h-g=1$ and consequently $v_n\in \{g,g+1\}$ for 
all $n\ge 1$. Let $w_n=v_n-g\in \{0,1\}$ and $\bw=w_1w_2\dots$. Then 
$t_r(\bw)=t_r(\bv)-t_r(g^\infty)=t_r(\bv)-gr/(1-r)$. Thus, up to replacing 
$\bv$ by $\bw$ we may assume that $g=0$. Assume that $\bw$ is not Sturmian. 
By Lemma \ref{l:Sturm}, since $\bw$ is aperiodic but not Sturmian, there 
exists a word $\bu$ of length $k\ge 0$ such that $0\bu 0$ and $1\bu 1$ are 
subwords of $\bw$. Then, by \eqref{eq:triv}, we have 
\[r+r^{k+3}=(1+r^{k+2})(B-A)\ge t_r(1\bu 1)-t_r(0\bu 0)=r+r^{k+2}.\]
Contradiction. 

For the other statements of the proposition, we may again assume $g=0$. Let 
$\bw$ be a Sturmian word over $\{0,1\}$ of slope $\theta$ and let 
$W=\{t_r(T^n\bw)\mid n\ge 0\}$. Note that 
$t_r(1\bc_\theta)=t_r(0\bc_\theta)+r$. By Lemma \ref{l:tr} applied to the 
Sturmian words $T^n\bw$, $0\bc_\theta$, and $1\bc_\theta$, we have 
\begin{gather*}
t_r(T^n\bw)=t_r(1\bc_\theta)-(t_r(1\bc_\theta)-t_r(T^n\bw))\ge t_r(1\bc_\theta)-r=t_r(0\bc_\theta),\\
t_r(T^n\bw)=t_r(0\bc_\theta)+(t_r(T^n\bw)-t_r(0\bc_\theta))\le t_r(0\bc_\theta)+r=t_r(1\bc_\theta).
\end{gather*}
Thus $W$ is contained in the interval $[t_r(0\bc_\theta),t_r(1\bc_\theta)]$ 
of length $r$. By \cite[Proposition 2.3]{LZ}, $W$ is not contained in any 
shorter interval. 

By Lemma \ref{l:tr},  $t_r(0\bc_\theta)\in W$ if and only if 
$T^n\bw=0\bc_\theta$ for some $n \ge 0$, and $t_r(1\bc_\theta)\in W$ if and 
only if $T^m\bw=1\bc_\theta$ for some $m \ge 0$. If both conditions are 
satisfied, then $T^{m+1}\bw=\bc_\theta=T^{n+1}\bw$ for $m\neq n$, which 
contradicts the aperiodicity of $\bw$. Thus $W$ is contained in 
$[t_r(0\bc_\theta),t_r(1\bc_\theta))$ or 
$(t_r(0\bc_\theta),t_r(1\bc_\theta)]$. Furthermore, $W$ is not contained in 
$(t_r(0\bc_\theta),t_r(1\bc_\theta))$ if and only if $T^n\bw=\bc_\theta$ for 
some $n\ge 1$. 
\end{proof}

The following statement is taken from \cite[Theorem~1]{AG}. 

\begin{cor}[Veerman]\label{c:Veerman}
An aperiodic word $\bw$ over $\{0,1\}$ is Sturmian if and only if there 
exists an infinite word $\bu$ over $\{0,1\}$ such that $0\bu\preclex 
T^n\bw\preclex1\bu$ for all $n\ge 0$. In this case, $\bu=\bc_\theta$ and 
\[0\bu=\inf\nolimits^\lex_{n\ge 0} T^n\bw, \quad 1\bu=\sup\nolimits^\lex_{n\ge 0} T^n \bw.\] 
Here $\theta$ denotes the slope of $\bw$, and $\inf^\lex$ and $\sup^\lex$ 
denote respectively the infimum and supremum with respect to the order 
$\preclex$. 
\end{cor}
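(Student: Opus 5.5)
We deduce Corollary~\ref{c:Veerman} from Proposition~\ref{p:pos} with a fixed $r\in(0,1/2)$, say $r=1/3$. By Lemma~\ref{l:tr0}, $t_r$ is then an order-preserving injection $(\{0,1\}^\N,\preclex)\to(\R,\le)$. This lets us pass freely between lexicographic inequalities among words over $\{0,1\}$ and numerical inequalities among their images under $t_r$.

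\emph{The ``only if'' direction.} Let $\bw$ be Sturmian of slope $\theta$ and put $\bu=\bc_\theta$. The proof of Proposition~\ref{p:pos} with $g=0$ gives
\[t_r(0\bc_\theta)\le t_r(T^n\bw)\le t_r(1\bc_\theta)\quad\text{for all } n\ge 0.\]
Since $t_r$ is order-preserving and injective, hence an order embedding, this becomes $0\bc_\theta\preclex T^n\bw\preclex 1\bc_\theta$.

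\emph{The ``if'' direction.} Suppose $0\bu\preclex T^n\bw\preclex 1\bu$ for all $n$. Applying $t_r$, every $t_r(T^n\bw)$ lies in $[t_r(0\bu),t_r(1\bu)]$, an interval of length $t_r(1\bu)-t_r(0\bu)=r$. Since $\bw$ is aperiodic over $\{0,1\}$, the first assertion of Proposition~\ref{p:pos} gives an integer $g$ and a Sturmian word $\bw'$ over $\{0,1\}$ with $w_i=g+w'_i$ for all $i$. Aperiodicity forces both letters $0$ and $1$ to occur in $\bw$, so $g=0$ and $\bw$ is Sturmian.

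\emph{Identifying $\bu$ and the infimum and supremum.} Let $\bw$ be Sturmian of slope $\theta$, and let $\bu$ be any word satisfying the inequalities. The set $W=\{t_r(T^n\bw)\}$ lies in $[t_r(0\bu),t_r(1\bu)]$, an interval of length $r$. By Proposition~\ref{p:pos} (via \cite[Proposition 2.3]{LZ}), $W$ lies in no shorter interval, so $\sup W-\inf W=r$. Then
\[t_r(0\bu)\le\inf W,\qquad t_r(1\bu)\ge \sup W,\qquad t_r(1\bu)-t_r(0\bu)=r\]
together force $t_r(0\bu)=\inf W$ and $t_r(1\bu)=\sup W$. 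The same reasoning applied to $\bc_\theta$ gives $t_r(0\bc_\theta)=\inf W$, so $t_r(0\bu)=t_r(0\bc_\theta)$. Injectivity of $t_r$ then yields $\bu=\bc_\theta$.

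It remains to convert the numerical inf/sup into lexicographic ones. Suppose $\bv$ is a lower bound of $\{T^n\bw\}$ in $\preclex$. Then $t_r(\bv)\le \inf W=t_r(0\bc_\theta)$, so $\bv\preclex 0\bc_\theta$, because an order embedding reflects $\le$. Hence $0\bc_\theta=\inf^\lex_{n\ge0}T^n\bw$, and the supremum is handled symmetrically.

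The main obstacle is this identification step. It requires the minimality of the interval length from \cite[Proposition 2.3]{LZ}, and it requires that $t_r$ reflects order and not merely preserves it. The latter holds because the map is order-preserving and injective on a totally ordered set.
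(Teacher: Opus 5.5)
Your proposal is correct and takes essentially the paper's route: Lemma~\ref{l:tr0} with $0<r<1/2$, Proposition~\ref{p:pos} for both directions, and the minimality of the length $r$ (Proposition~2.3 of the earlier Lu--Zheng paper) to pin down the extrema. The only difference is cosmetic. You identify the extrema through the numerical $\inf W$ and $\sup W$ and derive $\bu=\bc_\theta$ explicitly. The paper instead rules out a lower bound $\bv$ with $0\bc_\theta\prec_{\lex}\bv$ by contradiction, and leaves $\bu=\bc_\theta$ as an implicit consequence of the infimum and supremum identification.
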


\begin{proof}
Take $0<r<1/2$. The corollary follows easily from Proposition \ref{p:pos} and 
Lemma \ref{l:tr0}. In more detail, if there exists $\bu$ such that 
$0\bu\preclex T^n\bw\preclex1\bu$, then, by Lemma \ref{l:tr0}, 
$(t_r(T^n\bw))_{n\ge 0}$ is contained in the interval $[t_r(0\bu),t_r(1\bu)]$ 
of length $r$, which implies that $\bw$ is Sturmian by Proposition 
\ref{p:pos}. Conversely, assume that $\bw$ is Sturmian of slope $\theta$. 
Then, by Proposition \ref{p:pos}, for all $n\ge 0$, $t_r(0\bc_\theta)\le 
t_r(T^n\bw)\le t_r(1\bc_\theta)$, which implies $0\bc_\theta\preclex 
T^n\bw\preclex1\bc_\theta$. If there exists an infinite word $\bv$ over 
$\{0,1\}$ such that $0\bc_\theta\precnlex \bv\preclex T^n\bw$ for all $n\ge 
0$, then, by Lemma \ref{l:tr0}, $(t_r(T^n\bw))_{n\ge 0}$ is contained in the 
interval $[t_r(\bv),t_r(1\bc_\theta)]$ of length $<r$, which contradicts 
\cite[Proposition 2.3]{LZ}. Thus $0\bc_\theta=\inf\nolimits^\lex_{n\ge 0} 
T^n\bw$ and similarly $1\bc_\theta=\sup\nolimits^\lex_{n\ge 0} T^n \bw$. 
\end{proof}

\begin{cor}[Bugeaud--Dubickas]\label{c:BD}
Let $b\ge 2$ be an integer and let $\xi$ be an irrational real number. Then 
the sequence $(\pi(\xi b^n))_{n\ge 0}$ is contained in an interval of $\R/\Z$ 
of length $1/b$ if and only if $\xi=g/(b-1)+t_{1/b}(\bw)$, where $g$ is an 
integer and $\bw$ is a Sturmian word over $\{0,1\}$. In this case, $\xi$ is 
transcendental and the endpoints of the interval are 
\[
\pi(g/(b-1)+t_{1/b}(0\bc_{\theta})),\quad \pi(g/(b-1)+t_{1/b}(1\bc_{\theta})),
\]
where $\theta$ denotes the slope of $\bw$, and the sequence is contained in a 
semiopen interval with these endpoints. Furthermore, in this case, the 
sequence is contained in the open interval with these endpoints if and only 
if none of the words $T^n\bw$, $n\ge 1$ is characteristic Sturmian.
\end{cor}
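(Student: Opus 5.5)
The plan is to reduce Corollary \ref{c:BD} to Proposition \ref{p:pos} with $r=1/b$, via base-$b$ expansions. First I will write $\xi=\lfloor\xi\rfloor+\{\xi\}$ and expand $\{\xi\}$ in base $b$ as $t_{1/b}(\bv')$ for a digit word $\bv'$ over $\{0,\dots,b-1\}$. This word is aperiodic because $\xi$ is irrational. I then need to absorb the integer part into the word. Given any integer $g$, one has $t_{1/b}(g^\infty)=g/(b-1)$, and more generally adding a constant $c$ to every letter shifts $t_{1/b}$ by $c/(b-1)$. The key identity is
\[
\xi b^n \equiv t_{1/b}(T^n\bv) \pmod 1
\]
for any word $\bv$ with $t_{1/b}(\bv)\equiv \xi \pmod 1$. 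This holds because $b^n t_{1/b}(\bv)-t_{1/b}(T^n\bv)=\sum_{i\le n} v_i b^{n-i}$ is an integer. So $\pi(\xi b^n)=\pi(t_{1/b}(T^n\bv))$ for all $n$.

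For the direction ``$\Leftarrow$'': if $\xi=g/(b-1)+t_{1/b}(\bw)$ with $\bw$ Sturmian, take $\bv=g+\bw$ letterwise. Then $t_{1/b}(\bv)=\xi$. Proposition \ref{p:pos} gives that the real numbers $t_{1/b}(T^n\bv)$ lie in a semiopen interval with the stated endpoints, of length $1/b<1$. Projecting by $\pi$ is injective on such an interval, so the images lie in an interval of $\R/\Z$ with the projected endpoints. The open/semiopen statements transfer verbatim, since $\pi$ is a bijection from $[A,A+1/b]$ onto its image.

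For the direction ``$\Rightarrow$'', which is the main obstacle: the sequence $\pi(\xi b^n)$ lies in an arc $\pi([A,A+1/b])$, but the lifts $t_{1/b}(T^n\bv')\in[0,1)$ of the base-$b$ digit word need not lie in one real interval. I will choose the lift differently. By translating $\xi$ by an integer and choosing a suitable real representative, I can assume $A\in[0,1)$. Then, for each $n$, I let $x_n\in[A,A+1/b]$ be the unique lift of $\pi(\xi b^n)$. Since $bx_n\equiv x_{n+1}\pmod 1$, I define integer digits $v_{n+1}=bx_n-x_{n+1}$. These satisfy $x_n=t_{1/b}(T^n\bv)$, because the tail $x_{n+k}/b^k$ is bounded and so telescoping converges. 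The digits are bounded, since $v_{n+1}\in[bA-A-1/b,\,bA+1-A]$, so $\bv$ lives over a finite set of integers. Also $t_{1/b}(\bv)=x_0\equiv\xi$, and $\bv$ is aperiodic because otherwise $x_0$ would be rational. Now Proposition \ref{p:pos} applies to $\bv$ with $r=1/b$. It gives $\bv=g+\bw$ with $\bw$ Sturmian, hence $\xi\equiv g/(b-1)+t_{1/b}(\bw)\pmod 1$. Absorbing the integer difference requires replacing $g$ by $g+m(b-1)$ for an integer $m$, which keeps $g$ an integer; this completes the characterization. The endpoint and open-interval claims then follow as in the first direction.

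Finally, transcendence of $\xi$ should follow from known results: such $\xi$ has a Sturmian (hence low-complexity, non-eventually-periodic) base-$b$ expansion up to a rational shift. Here $t_{1/b}(\bw)$ for Sturmian $\bw$ is transcendental by the theorem of Ferenczi--Mauduit, or more generally Adamczewski--Bugeaud on complexity of algebraic numbers. Citing this completes the proof.
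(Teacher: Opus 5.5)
Your proposal is correct and follows essentially the same route as the paper. Both arguments lift each $\pi(\xi b^n)$ into a lift of the interval: you use $[A,A+1/b]$, while the paper uses $(\eta,\eta+1)$ with $\pi(\eta)\notin I$. Both then define integer digits by $bx_n-x_{n+1}$ so that $x_n=t_{1/b}(T^n\bv)$, deduce aperiodicity from irrationality, apply Proposition~\ref{p:pos} with $r=1/b$, transfer the endpoint and open-interval claims through $\pi$, and cite Ferenczi--Mauduit for transcendence. You are slightly more explicit than the paper about the converse direction and about absorbing the integer $\xi-x_0$ into $g$ via $g\mapsto g+m(b-1)$.
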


\begin{proof}
This is essentially \cite[Theorem 2.1]{BD}, except that here we do not a 
priori exclude the case where $\pi(0)$ is in the interior of the interval or 
is the upper endpoint. Assume that $(\pi(\xi b^n))_{n\ge 0}$ is contained in 
an interval $I\subseteq \R/\Z$ of length $1/b$. Choose $\eta\in \R$ such that 
$\pi(\eta)\notin I$. Then $I$ lifts to a unique interval $\tilde I\subseteq 
(\eta,\eta+1)$ of length $1/b$. For $n\ge 0$, let $y_n=\{\xi 
b^n-\eta\}+\eta\in (\eta,\eta+1)$ and $v_n=by_n-y_{n+1}\in \Z\cap 
((b-1)\eta-1,(b-1)\eta+b)$. Since $\pi(y_n)=\pi(\xi b^n)$, we have $y_n\in 
\tilde I$ for all $n\ge 0$. Let $\bv=v_0v_1\dots$. Then 
$t_{1/b}(T^n\bv)=\sum_{i=0}^\infty v_{n+i} b^{-(i+1)}=y_n$. Since $y_0$ is 
irrational, $\bv$ is aperiodic. Then, by Proposition \ref{p:pos}, there 
exists $g\in \Z$ and a Sturmian word $\bw=w_0w_1\dots $ over $\{0,1\}$ such 
that $v_n=g+w_n$ for all $n\ge 0$. Thus 
$\pi(\xi)=\pi(y_0)=\pi(g/(b-1)+t_{1/b}(\bw))$. By a theorem of Ferenczi and 
Mauduit \cite[Proposition 2]{FM}, $t_{1/b}(\bw)$ is transcendental and 
consequently the same holds for $\xi$. The other assertions of Corollary 
\ref{c:BD} follow easily from the corresponding assertions of Proposition 
\ref{p:pos} and the formula $\pi(t_{1/b}(T^n\bv))=\pi(b^nt_{1/b}(\bv))$. 
\end{proof}

\section{Proofs of the main results}\label{s:proof}

We need a slight generalization of Sturmian words.

\begin{notation}
For $l\ge 0$, we let $\cS_l$ denote the collection of infinite words over 
$\{0,1\}$ that are of the form $0^l\bs$ or $1^l\bs$, where $\bs$ is a 
Sturmian word. We write $\cS=\bigcup_{l\ge 0}\cS_l$.
\end{notation}

The following characterization of the class $\cS$ parallels Lemma 
\ref{l:Sturm}. 

\begin{lemma}\label{l:S}
Let $\bw$ be an aperiodic word over $\{0,1\}$. Then $\bw\in \cS$ if and only 
if there does not exist a finite word $\bu$ such that $01\bu1$ and $10\bu 0$ 
are subwords of~$\bw$. 
\end{lemma}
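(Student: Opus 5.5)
We prove both directions by reducing to Lemma~\ref{l:Sturm}. The aperiodicity hypothesis is what makes that lemma available.

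\emph{Only if.} Suppose $\bw=a^l\bs$ with $a\in\{0,1\}$ and $\bs$ Sturmian, and suppose, for a contradiction, that $01\bu1$ and $10\bu0$ both occur in $\bw$. First consider occurrences that lie entirely inside $\bs$. Then $1\bu1$ and $0\bu0$ are both subwords of $\bs$, which contradicts Lemma~\ref{l:Sturm}.

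So at least one occurrence must meet the prefix $a^l$. Any subword of $\bw$ that starts inside the prefix has the form $a^j\bv$, where $\bv$ is a prefix of $\bs$. Such a word can contain the factor $ab$ with $b\neq a$ only at the boundary between the prefix and $\bs$, or later inside $\bs$.

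\begin{itemize}
\item If the occurrence starting in the prefix is $01\bu1$ (so $a=0$), then its initial $0$ is the last letter of $0^l$ or lies inside $\bs$. In either case $1\bu1$ is a subword of $\bs$.
\item If it is $10\bu0$ (so $a=1$), the same argument shows that $0\bu0$ is a subword of $\bs$.
\end{itemize}

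The other word, $10\bu0$ or $01\bu1$ respectively, contains the letter other than $a$ at its start. Hence it cannot begin strictly inside the prefix, except at the last prefix position followed by $\bs$. In every case its tail $0\bu0$ or $1\bu1$ lies in $\bs$. This again contradicts Lemma~\ref{l:Sturm}. I will simply verify this boundary bookkeeping carefully in the write-up.

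\emph{If.} Assume no $\bu$ makes both $01\bu1$ and $10\bu0$ subwords of $\bw$. Write $\bw=a^l\bs$, where $l\ge 0$ is chosen so that $\bs$ begins with the letter $1-a$; this is possible since $\bw$ is aperiodic and so not constant. I claim that $T\bs$, or suitably $\bs$ itself, is Sturmian.

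Since $\bs$ is aperiodic, Lemma~\ref{l:Sturm} reduces this to showing that $\bs$ has no pair of subwords $0\bu0$, $1\bu1$. Suppose such a pair exists. Each occurrence that is not at the very beginning of $\bw$ is preceded by some letter. I would like to extend to $x0\bu0$ and $y1\bu1$ with $x=1$ and $y=0$. Preceding letters cannot be chosen at will, so I will use a minimality argument instead. Take $\bu$ to be a shortest word such that $0\bu0$ and $1\bu1$ both occur in $T^m\bw$ for some $m$; the key point is that the hypothesis is inherited by all suffixes.

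Suppose the occurrence of $0\bu0$ is preceded by $0$, say. Then $00\bu0$ and $1\bu1$ both occur. If $\bu$ is empty, the pair is $00$, $11$, and we need $100$ together with some $011$. A word in which both $00$ and $11$ occur infinitely often contains both $100$ and $011$, using aperiodicity. Taking $\bu$ empty, this gives $10\cdot0$ and $01\cdot1$, a contradiction.

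In general, I would argue as in the standard proof of Lemma~\ref{l:Sturm}. Unbalanced aperiodic words have a minimal unbalanced pair of the form $0\bp0$, $1\bp1$ with $\bp$ a palindrome. Moreover, infinitely many occurrences of each can be taken, since unbalance persists in every suffix when $\bw\notin\cS$. Every occurrence of $0\bp0$ beyond the start is preceded by a letter. If it were always preceded by $0$, then $00\bp$ followed by $0$ gives an unbalanced pair $0\bp'0$ against $1\bp1$ with a shorter palindrome structure. This would contradict minimality, so some occurrence is preceded by $1$, and symmetrically some occurrence of $1\bp1$ is preceded by $0$. This yields $10\bp0$ and $01\bp1$, contradicting the hypothesis.

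This minimality step is the main obstacle. I expect to have to exploit the palindromic form of the minimal unbalanced factor, as in Lothaire's proof of Proposition~2.1.17. Finally, the condition that unbalance occurs only near the start translates exactly to $\bw=a^l\bs$.
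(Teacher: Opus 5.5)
Your \emph{only if} direction is correct: in each of $01\bu1$ and $10\bu0$, one of the first two letters differs from the prefix letter $a$, so $1\bu1$ and $0\bu0$ both lie in $\bs$. For the \emph{if} direction, the right set-up is $\bw=a^l\bs$ with $l\ge1$ and $\bs$ beginning with $1-a$, and you need $\bs$ itself, not merely $T\bs$, to be Sturmian.

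The genuine gap is the step you flag yourself. You claim that if every occurrence of $0\mathbf{p}0$ is preceded by $0$, then $00\mathbf{p}0$ and $1\mathbf{p}1$ yield a shorter unbalanced pair. That mechanism works only when $\mathbf{p}$ contains a $1$. If its first $1$ is at position $i$, the prefixes $0^{i+1}$ of $00\mathbf{p}$ and $10^{i-1}1$ of $1\mathbf{p}1$ give a pair with $\mathbf{q}=0^{i-1}$, which is shorter than $\mathbf{p}$. When $\mathbf{p}=0^m$, the only new information is the factor $0^{m+3}$. The words $0^{m+3}$ and $10^m1$ contain no shorter pair $0\mathbf{q}0$, $1\mathbf{q}1$, so minimality gives nothing. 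The same failure occurs on the other side when $\mathbf{p}=1^m$, and when $\mathbf{p}$ is empty. These cases need a different argument: a maximal block of $0$s meeting $\bs$ is preceded in $\bw$ by a $1$, because $\bs$ begins with $1-a$ and $l\ge1$.

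Two supporting claims are also wrong. First, minimizing over factors of $T^m\bw$ "for some $m$" includes $m=0$, so it picks up irrelevant pairs such as $00$ inside the prefix $0^l$; the minimization must be over factors of $\bs$. Second, the assertion that unbalance persists in every suffix when $\bw\notin\cS$ is false. For example, $\bw=0011\bs'$ with $\bs'$ Sturmian of slope $<1/2$ is not in $\cS$, yet $T^4\bw$ is Sturmian. You do not need this claim anyway, since $l\ge1$ already gives every occurrence inside $\bs$ a preceding letter.

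The paper's argument supplies the missing idea and needs neither minimality nor palindromes. Take any $\bu$ such that $0\bu0$ and $1\bu1$ are factors of $\bs$.
\begin{itemize}
\item Suppose $01\bu1$ occurs in $\bw$. Then the hypothesis forces every occurrence of $0\bu0$ in $\bs$ to be preceded by $0$, so $00\bu0$ is a factor of $\bs$.
\item Let $k$ be the number of leading $0$s of $\bu$. The word $01\bu1$ contains $010^k1$.
\item The word $0^{k+2}$ is a prefix of $00\bu$ lying inside $\bs$. By the block argument it extends to a factor $10^{k+2}=10\,0^k\,0$ of $\bw$. This contradicts the hypothesis for the word $0^k$.
\item Hence $11\bu1$ is a factor of $\bs$, and symmetrically so is $00\bu0$. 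Then $011$ and $100$ occur in $\bw$, contradicting the hypothesis for the empty word.
\end{itemize}
The block argument applied to $0^{k+2}$ is exactly what your minimality argument lacks in the case $\mathbf{p}=0^m$.
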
 

\begin{proof}
The ``only if'' part.  We have $\bw=0^l\bs$ or $\bw=1^l\bs$ with $l\ge 0$ and 
$\bs$ Sturmian. If $01\bu 1$ and $10\bu 0$ are subwords of $\bw$, then 
$0\bu0$ and $1\bu 1$ are subwords of $\bs$, which contradicts Lemma 
\ref{l:Sturm}. 

The ``if'' part. We have $\bw=0^l\bs$ with $l\ge 1$ and $\bs=1\dots$ or 
$\bw=1^l\bs$ with $l\ge 1$ and $\bs=0\dots$. Assume that $\bs$ is not 
Sturmian. By Lemma \ref{l:Sturm}, there exists a finite word $\bu$ such that 
$0\bu0$ and $1\bu 1$ are subwords of~$\bs$. Assume that $01\bu1$ is a subword 
of $\bw$. Then, by assumption, $10\bu0$ is not a subword of $\bw$. It follows 
that $00\bu 0$ is a subword of $\bs$. Let $k\ge 0$ be the number of leading 
$0$s in $\bu$. Then $010^k1$ is a subword of $01\bu 1$ and thus a subword of 
$\bw$. Moreover, $0^{k+2}$ is a subword of $00\bu$ and thus a subword of 
$\bs$, and consequently $10^{k+2}=100^k0$ is a subword of $\bw$. This 
contradicts the assumption. Therefore, $01\bu1$ is not a subword of $\bw$ and 
consequently $11\bu 1$ is a subword of $\bs$. Similarly, $00\bu0$ is a 
subword of $\bs$. It follows that $011$ and $100$ are subwords of $\bw$. 
Contradiction. 
\end{proof}

The order-preserving property in Lemma \ref{l:tr} extends to $\cS$. 

\begin{lemma}
Let $0<r<1$. Let $\bs,\bs'\in \cS$ be of the same slope and such that 
$\bs\precnlex \bs'$. Then $t_r(\bs)<t_r(\bs')$. 
\end{lemma}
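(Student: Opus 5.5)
The plan is to reduce to Lemma \ref{l:tr} by peeling off the constant prefixes. Write $\bs=a^l\bs_0$ and $\bs'=a'^{l'}\bs'_0$ with $a,a'\in\{0,1\}$ and $\bs_0,\bs'_0$ Sturmian. Both have slope $\theta$, and so do $\bs_0$, $\bs'_0$. The key claim will be a uniform comparison: for any $\bs\in\cS$ of slope $\theta$, the difference sequence $(s'_n-s_n)$ with any other such word, after removing zeroes, alternates in sign. Then the same estimate as in Lemma \ref{l:tr} applies. If the first nonzero term is positive and occurs at index $n$, then the partial sums $\sum_{i\ge n}(s'_i-s_i)r^i$ form an alternating series with decreasing moduli. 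It is therefore strictly positive, because the leading term $r^n$ strictly dominates once some later term differs. If all later terms vanish, the sum is just $r^n>0$.

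So the core step is the alternation property: there should be no finite word $\bu$ such that $0\bu0$ occurs in $\bs$ and $1\bu1$ occurs in $\bs'$ \emph{at the same positions}. Positionwise alternation of $s'-s$ is exactly this statement restricted to aligned occurrences. I would prove it using Lemma \ref{l:ineq}-type counting. Suppose $s_i=0,s'_i=1$, $s_j=0,s'_j=1$ with $i<j$ and $s_m=s'_m$ for $i<m<j$. Then $\sum_{m=i}^{j}(s'_m-s_m)=2$.

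For this I need a balance bound: for words in $\cS$ of slope $\theta$, sums over a window $[i,j]$ differ from $\theta(j-i+1)$ by less than $1$. This is not quite true near the prefix $a^l$. So I would split into cases.

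If the window $[i,j]$ lies entirely in the Sturmian parts of both words (that is, $i>\max(l,l')$), then Lemma \ref{l:ineq} applied to $\bs_0$ and $\bs'_0$ gives each window sum within distance $<1$ of $\theta k$. Hence the difference of window sums is $<2$, which is a contradiction.

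If the window meets a prefix, I would argue directly. Suppose $i\le l$, so $s_i=a$. Since $s_i=0$, we get $a=0$, so $\bs=0^l\bs_0$. Similarly, if $i\le l'$ then $a'=1$.

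Consider the subcase $a=0$ with $i\le l$. Here $s$ is $0$ on $[i,\min(j,l)]$, while $s'$ agrees with $s$ strictly between $i$ and $j$. So $s'$ is also $0$ on $(i,\min(j,l)]$. I then need $\sum_{m=i}^{j}s'_m\le \sum s_m+1$, to be derived from $\bs'$'s balance.

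This prefix case is the main obstacle. One must use that the prefix block $0^l$ followed by a Sturmian word still satisfies a one-sided balance bound: window sums starting in $0^l$ are at most the corresponding Sturmian-window sum. That in turn compares to $\theta k+1$. Meanwhile the $\bs'$ window, being part of a word in $\cS$, is at least $\theta k-1$ only in the Sturmian part. Hence a careful case analysis of where $l$ and $l'$ fall relative to $i$ and $j$ is needed.

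An alternative I would try first is more conceptual. Extend the prefixes backward: $0^l\bs_0$ agrees on positions $>l$ with $\bs_0$, and the lexicographic/alternation data is local. One can then reduce to Lemma \ref{l:Sturm2} applied to $T^{l}\bs$ and $T^{l'}\bs'$, handling the finitely many prefix positions by the fact that a constant block contributes no sign changes of one type.
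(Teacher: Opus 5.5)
There is a genuine gap: the key claim your argument rests on, that the difference sequence of two words of $\cS$ of the same slope alternates in sign after removing zeroes, is false. Take any irrational $\theta$ and let $\bs=00\bc_\theta$ and $\bs'=11\bc_\theta$. Both lie in $\cS_2$, both have slope $\theta$, and $\bs\precnlex\bs'$. Yet $(s'_n-s_n)$ is $1,1,0,0,\dots$, which does not alternate. Equivalently, with $\bu$ empty, $0\bu0$ and $1\bu1$ occur at the same positions of $\bs$ and $\bs'$. In particular, the inequality $\sum_{m=i}^{j}s'_m\le\sum_{m=i}^{j}s_m+1$ you want in the prefix subcase fails here for $i=1$, $j=2$. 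So no case analysis of where $l,l'$ fall can establish it, and the closing ``alternative'' via Lemma \ref{l:Sturm2} cannot yield alternation either. Only your case where the window lies beyond $\max(l,l')$ is correct, and that is just Lemma \ref{l:Sturm2} applied to the Sturmian tails.

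The missing idea is that the prefix region needs a sign-and-size estimate, not alternation. Let $N=\max(l,l')$ and write $\bs=\bu\,T^N\bs$ and $\bs'=\bu'\,T^N\bs'$ with $\bu,\bu'$ of length $N$. The tails $T^N\bs$ and $T^N\bs'$ are Sturmian of slope $\theta$.
\begin{enumerate}
\item If $\bu=\bu'$, the claim is $r^N$ times Lemma \ref{l:tr} applied to the tails.
\item If $\bu\ne\bu'$, then $\bu\precnlex\bu'$. At least one of $\bu,\bu'$ is constant, so necessarily $\bu=0^N$ or $\bu'=1^N$. Hence every nonzero difference among the first $N$ positions equals $+1$, and $t_r(\bu')-t_r(\bu)\ge r^N$. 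Lemma \ref{l:tr} bounds the tail contribution $r^N\lvert t_r(T^N\bs')-t_r(T^N\bs)\rvert$ by $r^{N+1}$. Therefore $t_r(\bs')-t_r(\bs)\ge r^N(1-r)>0$.
\end{enumerate}
Your remark that a constant block ``contributes no sign changes of one type'' points at this one-signedness of the prefix differences. It must be combined with the magnitude comparison above rather than with an alternating-series bound. This is the paper's proof.
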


\begin{proof}
Assume $\bs\in \cS_l$ and $\bs'\in \cS_{l'}$. Let $n=\max(l,l')$. Write 
$\bs=\bu T^n\bs$ and $\bs'=\bu'T^n\bs'$, where $\bu$ and $\bu'$ are words of 
length $n$. Then $\{\bu,\bu'\}\cap \{0^n,1^n\}$ is nonempty. Moreover, 
$T^n\bs$ and $T^n\bs'$ are Sturmian. We have 
\[t_r(\bs')-t_r(\bs)=(t_r(\bu')-t_r(\bu))+r^n(t_r(T^n\bs')-t_r(T^n\bs)).\]  
There are two cases for $\bs\precnlex \bs'$. 

Case $\bu=\bu'$ and $T^n\bs\precnlex T^n\bs'$. In this case, 
$t_r(\bs')-t_r(\bs)=r^n(t_r(T^n\bs')-t_r(T^n\bs))>0$ by Lemma \ref{l:tr}. 

Case $\bu\precnlex \bu'$. Since either $\bu=0^n$ or $\bu'=1^n$, we have 
$t_r(\bu')-t_r(\bu)\ge r^n$. Moreover, $\lvert 
t_r(T^n\bs')-t_r(T^n\bs)\rvert\le r$ by Lemma \ref{l:tr}. Thus 
$t_r(\bs')-t_r(\bs)\ge r^n-r^{n+1}>0$.
\end{proof}

The classes $\cS$ and $\cD$ are related as follows. 

\begin{lemma}\label{l:D}
We have $\cD=D(\cS)\coprod T(D(\cS))$.
\end{lemma}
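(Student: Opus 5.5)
We prove the two inclusions directly from the definitions $\cD=\{0^l(D\bs),\,1^l(D\bs)\}$ and $\cS=\{0^l\bs,\,1^l\bs\}$, where $l\ge 0$ and $\bs$ ranges over Sturmian words. Disjointness is then a separate parity argument.

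\emph{Inclusion $\cD\subseteq D(\cS)\cup T(D(\cS))$.} Take $\bw=a^l(D\bs)$ with $a\in\{0,1\}$.
\begin{itemize}
\item If $l=2m$ is even, then $\bw=D(a^m\bs)\in D(\cS)$.
\item If $l=2m+1$ is odd, then $\bw=T(a^{2m+2}D\bs)=T(D(a^{m+1}\bs))\in T(D(\cS))$.
\end{itemize}

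\emph{Inclusion $D(\cS)\cup T(D(\cS))\subseteq\cD$.} Take $\bw'=a^m\bs\in\cS$.
\begin{itemize}
\item $D\bw'=a^{2m}D\bs\in\cD$.
\item If $m\ge 1$, then $TD\bw'=a^{2m-1}D\bs\in\cD$.
\item If $m=0$, write $\bs=s_1\bs_1$. Then $\bs_1=T\bs$ is Sturmian and $TD\bs=s_1D\bs_1=s_1^1D(T\bs)\in\cD$.
\end{itemize}
This establishes equality of the two sets.

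\emph{Disjointness.} Suppose $\bw\in D(\cS)\cap T(D(\cS))$. A word in $D(\cS)$ satisfies $w_{2i-1}=w_{2i}$ for all $i\ge1$. A word in $T(D(\cS))$ satisfies $w_{2i}=w_{2i+1}$ for all $i\ge1$. Together these force $w_n=w_{n+1}$ for all $n\ge1$, so $\bw$ is constant.

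Every element of $\cD$, however, is aperiodic, since $\bs$ has irrational slope and hence so does $D\bs$. So $\bw$ cannot be constant, and the union is disjoint.

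All steps are elementary. The only point needing care is the case $m=0$ in the reverse inclusion, which uses that $T$ preserves Sturmian words.
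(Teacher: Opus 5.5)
Your proof is correct and follows essentially the same approach as the paper: the same parity split on $l$ (giving $D(a^m\bs)$ or $T(D(a^{m+1}\bs))$), the same identity $T(D\bs)=s_1D(T\bs)$ to handle unpadded Sturmian words, and the same disjointness argument. For disjointness you observe that a word lying in both $D(\{0,1\}^\N)$ and $T(D(\{0,1\}^\N))$ must be constant, and that no element of $\cD$ is constant.
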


\begin{proof}
By definition $D(\cS)\subseteq \cD$ and $T(D(\cS_l))\subseteq \cD$ for $l\ge 
1$. For $\bs=s_1\dots$ Sturmian, $T(D(\bs))=s_1D(T(\bs))\in \cD$. Thus 
$D(\cS)\cup T(D(\cS))\subseteq \cD$. 

For $\bw\in \cD$, we have $\bw=a^lD\bs$ for $a\in \{0,1\}$, $l\ge 0$ and 
$\bs$ Sturmian. If $l=2k$ is even, then $\bw=D(a^k\bs)$. If $l=2k-1$ is odd, 
then $\bw=T(D(a^k\bs))$. Thus $\cD=D(\cS)\cup T(D(\cS))$.

Finally, for $\Sigma=\{0,1\}$, $D(\Sigma^\N)\cap 
T(D(\Sigma^\N))=\{0^\infty,1^\infty\}$. Thus $D(\cS)\cap T(D(\cS))$ is empty. 
\end{proof}

Since $\cS$ is stable under $T$, it follows from Lemma \ref{l:D} (or the 
definition) that $\cD$ is also stable under $T$.  

Next we analyze the alternate order on $\cD$. It is convenient to do this for 
a more general class of words. 

\begin{notation}\label{n:P}
Let $\Sigma=\{0,1\}$, $\cP=D(\Sigma^\N)\cup T(D(\Sigma^\N))$. We put 
\begin{align*}
\cP_{1^\infty]}&=\{\bw\in \cP\mid \bw\precalt 1^\infty\},\\ 
\cP_{[1^\infty,0^\infty]}&=\{\bw\in \cP\mid 1^\infty \precalt\bw\precalt 0^\infty\},\\ 
\cP_{[0^\infty}&=\{\bw\in \cP\mid 0^\infty\precalt \bw\}.
\end{align*}
\end{notation} 

\begin{lemma}\label{l:P}\leavevmode
\begin{enumerate}
\item The map $(\Sigma^\N,\preclex)\to (\cP_{1^\infty]},\precalt)$ carrying 
    $\bu$ to $1D\bu$ is an order-preserving bijection. 
\item The map $(\Sigma^\N,\preclex)\to 
    (\cP_{[1^\infty,0^\infty]},\precalt)$ carrying $\bu$ to $D\bu$ is an 
    order-reversing bijection. 
\item The map $(\Sigma^\N,\preclex)\to (\cP_{[0^\infty},\precalt)$ carrying 
    $\bu$ to $0D\bu$ is an order-preserving bijection. 
\end{enumerate}
\end{lemma}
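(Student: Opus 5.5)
The plan is a direct computation with the definitions. Recall that $\bv\precalt\bw$ means $N\bv\preclex N\bw$. For words over $\{0,1\}$, the alternate order compares the first position $i$ where $\bv$ and $\bw$ differ: if $i$ is odd, then $\bv\precnalt\bw$ iff $v_i>w_i$; if $i$ is even, then $\bv\precnalt\bw$ iff $v_i<w_i$. So $1^\infty$ is the smallest word whose first letter is $1$, and $0^\infty$ is the largest word whose first letter is $0$. Concretely, $1^\infty\precalt\bw$ for every $\bw$ starting with $1$, because at the first difference the letter of $\bw$ is $0$ in an even position. The three pieces in Notation~\ref{n:P} should therefore be described by first letters and parity.

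The first step is to identify the three pieces explicitly. Every word of $\cP$ starting with $0$ satisfies $0^\infty$-comparisons in a way I will now work out. Take $\bw=D\bu$ and suppose it first differs from $0^\infty$ at position $i$. Then $w_i=1$, and since $D\bu$ begins with the pair $u_ju_j$, the index $i$ is odd. Hence $D\bu\precalt 0^\infty$ whenever $D\bu\neq 0^\infty$.

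Symmetrically, consider $D\bu$ with first letter $1$. If it first differs from $1^\infty$ at position $i$, that index is odd and $w_i=0$, so $1^\infty\precalt D\bu$. Thus every $D\bu$ lies in $\cP_{[1^\infty,0^\infty]}$.

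For $0D\bu$, the first difference from $0^\infty$ occurs at an even position with letter $1$, giving $0^\infty\precalt 0D\bu$. Likewise $1D\bu\precalt 1^\infty$.

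Since $\cP=D(\Sigma^\N)\cup T(D(\Sigma^\N))$ and $T(D\bu)=u_1D(T\bu)$, every element of $T(D(\Sigma^\N))$ has the form $0D\bu'$ or $1D\bu'$. The three families $\{1D\bu\}$, $\{D\bu\}$ and $\{0D\bu\}$ therefore cover $\cP$. Combined with the comparisons above, this shows the three maps are surjective onto the respective pieces. The overlaps occur only at the boundary words: $1D(1^\infty)=D(1^\infty)=1^\infty$ and $0D(0^\infty)=D(0^\infty)=0^\infty$. These are consistent with the closed boundary conditions, and any other word cannot belong to two pieces because the comparisons are strict away from $1^\infty$ and $0^\infty$.

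Injectivity of each map is clear, since $D$ is injective.

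For monotonicity, let $\bu\precnlex\bu'$ first differ at index $j$, with $u_j=0$ and $u'_j=1$. Then $D\bu$ and $D\bu'$ first differ at the odd position $2j-1$, with letters $0<1$, so $D\bu'\precnalt D\bu$; this shows the map in (b) is order-reversing. Prefixing one letter shifts that first difference to the even position $2j$, so $1D\bu\precnalt 1D\bu'$ and $0D\bu\precnalt 0D\bu'$; this shows the maps in (a) and (c) are order-preserving.

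The only mildly delicate step is the careful bookkeeping of the boundary cases $0^\infty$ and $1^\infty$ in the bijectivity claim; everything else is routine.
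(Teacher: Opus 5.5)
Your argument is correct and is essentially the paper's proof written out in detail. The paper sorts words by whether the number of leading $1$s (resp.\ $0$s) is odd. That is the same as your criterion on the parity of the position where a word of $\cP$ first differs from $1^\infty$ (resp.\ $0^\infty$). The order statements come, as in your argument, from the first difference sitting at position $2j-1$ for $D\bu$ versus position $2j$ for $aD\bu$.

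One correction: delete the claims in your opening paragraph that $1^\infty$ is the smallest word beginning with $1$ and $0^\infty$ the largest word beginning with $0$. They are false: for instance $1D(0^\infty)=10^\infty\precnalt 1^\infty$ and $0^\infty\precnalt 01^\infty=0D(1^\infty)$. They also contradict your own later, correct computation $1D\bu\precalt 1^\infty$. Nothing downstream uses them, so removing them leaves the proof intact.
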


\begin{proof}
The bijectivity follows from the fact that for any $\bw\in \Sigma^\N$, 
$\bw\precnalt 1^\infty$ if and only if the number of leading $1$s in $\bw$ is 
odd and $0^\infty\precnalt \bw$ if and only if the number of leading $0$s in 
$\bw$ is odd. The preservation and reversal of the order follow from the 
definition. 
\end{proof}

\begin{lemma}\label{l:t-r}
Let $0<r<1/2$. Then $t_{-r}$ restricts to an order-preserving injection 
$(\Sigma^\N,\precalt)\to (\R,\le)$. 
\end{lemma}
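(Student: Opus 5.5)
The plan is to reduce the statement to Lemma \ref{l:tr0} via the map $N$ from the definition of $\precalt$. For $\bw\in\Sigma^\N$ we have
\[t_{-r}(\bw)=\sum_{i\ge 1} w_i(-r)^i=\sum_{i\ge 1}(N\bw)_i\, r^i=t_r(N\bw),\]
where $(N\bw)_i=(-1)^iw_i\in\{0,\pm1\}$. The complication is that $N\bw$ is a word over $\{-1,0,1\}$ rather than over $\{0,1\}$. So instead of quoting Lemma \ref{l:tr0} verbatim, I will rerun its one-line estimate directly for the differences.

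Let $\bv,\bw\in\Sigma^\N$ with $\bv\precnalt\bw$, so $N\bv\precnlex N\bw$. Then there is $n\ge1$ with $v_i=w_i$ for $i<n$ and $(-1)^nv_n<(-1)^nw_n$. We compute
\[t_{-r}(\bw)-t_{-r}(\bv)=\sum_{i\ge n}(w_i-v_i)(-r)^i.\]
The leading term is $(-1)^n(w_n-v_n)r^n$. Since $w_n-v_n\in\{\pm1\}$ and the sign condition holds, this term equals $r^n$.

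Every later term has $|w_i-v_i|\le 1$, because both words are over $\{0,1\}$. So the tail is bounded in absolute value by $\sum_{i>n}r^i=r^{n+1}/(1-r)$. This gives
\[t_{-r}(\bw)-t_{-r}(\bv)\ge r^n\Bigl(1-\frac{r}{1-r}\Bigr)>0,\]
which uses $r<1/2$.

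This single inequality yields both strict monotonicity and injectivity. It also settles the non-strict direction, since $\bv=\bw$ gives equality.

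There is no real obstacle. The only point needing care is checking that the sign bookkeeping makes the leading term exactly $+r^n$. It does, because $(-1)^n(w_n-v_n)=(N\bw)_n-(N\bv)_n=1$.
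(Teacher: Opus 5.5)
Your proposal is correct and follows the paper's proof essentially verbatim: take the first index $n$ where the words differ, note that $(-1)^n(w_n-v_n)=1$, and bound the tail by $\sum_{i>n}r^i$ to get $t_{-r}(\bw)-t_{-r}(\bv)\ge r^n\bigl(1-\frac{r}{1-r}\bigr)>0$. For injectivity you are implicitly using that $\precalt$ is a total order, which holds because $N$ is injective and $\preclex$ is total.
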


\begin{proof}
Let $\bs=s_1s_2\dots$, $\bs'=s'_1s'_2\dots\in \{0,1\}^\N$ with $\bs\precnalt 
\bs'$. Then there exists $n\ge 1$ such that $s_i=s'_i$ for all $i<n$ and 
$(-1)^n(s'_n-s_n)=1$. Thus
\[t_{-r}(\bs')-t_{-r}(\bs)=\sum_{i=1}^\infty (s'_i-s_i)(-r)^i\ge r^n-\sum_{i=n+1}^\infty r^{i}=r^n(1-\frac{r}{1-r})>0.\] 
\end{proof}

\begin{lemma}\label{l:P2}
Let $0<r<1$ and let $\bw \in \cP$.
\begin{enumerate}
\item If $\bw\precalt 1^\infty$, then $t_{-r}(\bw)\le t_{-r}(1^\infty)$ and 
    equality holds if and only if $\bw=1^\infty$. 
\item If $1^\infty\precalt\bw\precalt 0^\infty$, then $t_{-r}(1^\infty)\le 
    t_{-r}(\bw)\le t_{-r}(0^\infty)$ and the first (resp.\ second) equality 
    holds if and only if $\bw=1^\infty$ (resp.\ $\bw=0^\infty$). 
\item If $0^\infty\precalt \bw$, then $t_{-r}(0^\infty)\le t_{-r}(\bw)$ and 
    equality holds if and only if $\bw=0^\infty$. 
\end{enumerate} 
\end{lemma}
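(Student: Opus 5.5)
By Lemma \ref{l:P}, every $\bw\in\cP$ in the range of each part has an explicit form: $\bw=1D\bu$ in (a), $\bw=D\bu$ in (b), and $\bw=0D\bu$ in (c), for some $\bu\in\Sigma^\N$. The plan is to compute $t_{-r}$ of these words directly in terms of a positive-base quantity attached to $\bu$, and then read off the inequalities. The point is that for a doubled word the alternating signs pair up. For $\bu=u_1u_2\dots$ we have
\[t_{-r}(D\bu)=\sum_{i\ge1}u_i\bigl((-r)^{2i-1}+(-r)^{2i}\bigr)=-(1-r)\sum_{i\ge1}u_i r^{2i-1}=-\frac{1-r}{r}\,t_{r^2}(\bu).\]
Similarly, $t_{-r}(1D\bu)=-r+(-r)\,t_{-r}(D\bu)=-r+(1-r)t_{r^2}(\bu)$ and $t_{-r}(0D\bu)=(1-r)t_{r^2}(\bu)$. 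In each case $t_{-r}(\bw)$ is an affine function of $t_{r^2}(\bu)$. Its slope is positive in (a) and (c) and negative in (b), which matches the order-preserving or order-reversing behaviour in Lemma \ref{l:P}.

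The next step is the elementary bound $0\le t_{r^2}(\bu)\le t_{r^2}(1^\infty)=r^2/(1-r^2)$. Equality holds on the left iff $\bu=0^\infty$ and on the right iff $\bu=1^\infty$, since all coefficients $r^{2i}$ are positive. This requires no restriction like $r<1/2$, which is why the lemma holds for all $0<r<1$.

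Now substitute into the three cases.

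In (a), $t_{-r}(1D\bu)\le -r+(1-r)r^2/(1-r^2)=-r+r^2/(1+r)=-r/(1+r)=t_{-r}(1^\infty)$. Equality holds iff $\bu=1^\infty$, that is, iff $\bw=1^\infty$.

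In (c), $t_{-r}(0D\bu)\ge 0=t_{-r}(0^\infty)$, with equality iff $\bu=0^\infty$.

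In (b), $t_{-r}(D\bu)$ ranges over $[-\tfrac{1-r}{r}\cdot\tfrac{r^2}{1-r^2},0]=[-r/(1+r),0]=[t_{-r}(1^\infty),t_{-r}(0^\infty)]$. The endpoints are attained exactly at $\bu=1^\infty$ and $\bu=0^\infty$, that is, at $\bw=1^\infty$ and $\bw=0^\infty$.

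To finish, we must check that the parametrizations of Lemma \ref{l:P} really cover the hypotheses. We use the fact, from its proof, that a word $\bw\in\cP$ with $\bw\precalt 1^\infty$ has the form $1D\bu$; the boundary word $1^\infty=1D1^\infty$ is included in this. The analogous facts hold for the other two ranges. There is no real obstacle here: the only care needed is the sign bookkeeping in the computation of $t_{-r}(D\bu)$, and the observation that the equality cases transfer through the bijections of Lemma \ref{l:P}.
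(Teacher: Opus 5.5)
Your proposal is correct and follows essentially the same route as the paper: you use Lemma~\ref{l:P} to write $\bw$ as $1D\bu$, $D\bu$, or $0D\bu$, and you express $t_{-r}(\bw)$ as an affine function of $t_{r^2}(\bu)$ via exactly the formulas \eqref{eq:P2}. You then conclude from $t_{r^2}(\bu)\in[t_{r^2}(0^\infty),t_{r^2}(1^\infty)]$, with equality only at the constant words. One small correction: the coverage fact you invoke is the surjectivity asserted in the statement of Lemma~\ref{l:P}, so you do not need to appeal to its proof.
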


\begin{proof}
The inequalities follow from Lemma \ref{l:P} and the formulas 
\begin{equation}\label{eq:P2}
t_{-r}(D\bu)=-(r^{-1}-1)t_{r^2}(\bu),\quad t_{-r}(aD\bu)=-ar+(1-r)t_{r^2}(\bu),
\end{equation}
and $t_{r^2}(\bu)\in [t_{r^2}(0^\infty),t_{r^2}(1^\infty)]$, where $a\in 
\Sigma$ and $\bu\in \Sigma^\N$. The equality conditions follow from the fact 
that, for $a\in \{0,1\}$, $t_{r^2}(\bu)=t_{r^2}(a^\infty)$ if and only if 
$\bu=a^\infty$. 
\end{proof} 

\begin{lemma}\label{l:Dbounds}
Let $\bw\in \cD$ and let $\theta$ be the slope of $\bw$. Then 
$100D\bc_{\theta}\precalt \bw\precalt 011D\bc_{\theta}$.
\end{lemma}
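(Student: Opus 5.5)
The plan is to reduce the statement to Veerman's characterization (Corollary~\ref{c:Veerman}) through the order bijections of Lemma~\ref{l:P}. First I locate the two bounds. We have $100D\bc_\theta=1D(0\bc_\theta)$, so by Lemma~\ref{l:P}(a) it lies in $\cP_{1^\infty]}$. Likewise $011D\bc_\theta=0D(1\bc_\theta)$ lies in $\cP_{[0^\infty}$ by Lemma~\ref{l:P}(c). Since $\cD\subseteq\cP$ by Lemma~\ref{l:D}, every $\bw\in\cD$ lies in one of the three pieces $\cP_{1^\infty]}$, $\cP_{[1^\infty,0^\infty]}$, $\cP_{[0^\infty}$ of Notation~\ref{n:P}. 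I treat these three cases separately.

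If $\bw\in\cP_{[1^\infty,0^\infty]}$, then
\[100D\bc_\theta\precalt 1^\infty\precalt\bw\precalt 0^\infty\precalt 011D\bc_\theta,\]
and there is nothing more to prove. If $\bw\in\cP_{1^\infty]}$, the upper bound is automatic in the same way, since $\bw\precalt 1^\infty\precalt 0^\infty\precalt 011D\bc_\theta$. By Lemma~\ref{l:P}(a) we may write $\bw=1D\bu$ with $\bu\in\{0,1\}^\N$, and the lower bound becomes $0\bc_\theta\preclex\bu$. The case $\bw\in\cP_{[0^\infty}$ is symmetric: write $\bw=0D\bu$, and the required upper bound becomes $\bu\preclex 1\bc_\theta$ by Lemma~\ref{l:P}(c).

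The main step is to identify $\bu$ in the case $\bw=1D\bu$ (the other case is the same with the letters $0$ and $1$ exchanged). If $\bu$ starts with $1$, then $0\bc_\theta\precnlex\bu$ trivially, so assume $\bu=0^l\bt$ with $l\ge1$ and $\bt$ starting with $1$. Then $\bw=10^{2l}D\bt$. I compare this with the defining form $\bw=a^mD\bs$, where $\bs$ is Sturmian.
\begin{itemize}
\item If $m=0$, then $w_1=w_2$, which is impossible because $w_1=1$ and $w_2=0$.
\item Since $\bw$ starts with $1$, we must have $a=1$, and because $w_2=0$ this forces $m=1$.
\end{itemize}
Hence $D\bs=0^{2l}D\bt=D\bu$, so $\bs=\bu$ and $\bu$ is Sturmian. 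Since $\bw$ has slope $\theta$ and $D$ preserves slope, $\bu$ also has slope $\theta$. Corollary~\ref{c:Veerman} then gives $0\bc_\theta\preclex\bu$, as needed. In the symmetric case, if $\bu$ starts with $0$ the bound $\bu\precnlex 1\bc_\theta$ is trivial. Otherwise the same comparison shows that $\bu$ is Sturmian of slope $\theta$, and Corollary~\ref{c:Veerman} gives $\bu\preclex 1\bc_\theta$.

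The only delicate point is this bookkeeping: matching $\bw$ against the form $a^mD\bs$ to show that the stripped word $\bu$ is genuinely Sturmian, rather than merely an element of $\cS$, whenever the trivial comparison of first letters fails.
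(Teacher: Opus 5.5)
Your proof is correct and uses the same ingredients as the paper's: Lemma~\ref{l:P} to turn the $\precalt$-comparison into a $\preclex$-comparison, matching $\bw$ against the form $a^lD\bs$ to get $\bw=1D\bs$ (resp.\ $0D\bs$) with $\bs$ Sturmian of slope $\theta$, and Corollary~\ref{c:Veerman}. The paper skips your three-way split over $\cP$ by simply assuming $\bw\precalt 100D\bc_\theta$, which already forces $\bw=10\dots$. Likewise, your decomposition of $\bu$ by its leading zeros is unnecessary: the matching step only uses $w_1w_2=10$, so it also covers the case $\bu=0^\infty$ that your decomposition leaves out.
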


\begin{proof}
Assume $\bw\precalt 100D\bc_{\theta}$. Then $\bw=10\dots$. Thus $\bw=1D\bs$, 
where $\bs$ is Sturmian of slope $\theta$. Since $\bw\precalt 
1D(0\bc_{\theta})$, we have $\bs\preclex 0\bc_\theta$ by Lemma \ref{l:P}. By 
Corollary \ref{c:Veerman}, we must have $\bs=0\bc_\theta$ and consequently 
$\bw=100D\bc_\theta$. Similarly,   $011D\bc_{\theta}\precalt \bw$ implies 
$\bw=011D\bc_\theta$. 
\end{proof}

\begin{lemma}\label{l:Dtr}
Let $0<r<1$. Let $\bw,\bw'\in \cD$ be of the same slope and such that 
$\bw\precnalt \bw'$. Then $t_{-r}(\bw)<t_{-r}(\bw')$. 
\end{lemma}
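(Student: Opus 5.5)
We reduce to the lexicographic statement for $\cS$ (the preceding lemma, an extension of Lemma \ref{l:tr}) by means of Lemma \ref{l:D}, Lemma \ref{l:P} and the formulas \eqref{eq:P2}. Every $\bw\in\cD$ is $D\bu$ or $aD\bu$ with $a\in\{0,1\}$ and $\bu\in\cS$. The slope of $\bu$ is that of $\bw$: for $aD\bu=T(D(a\bu))$ this holds because prepending a letter does not change slope. We split according to which of the three pieces of $\cP$ in Notation \ref{n:P} contain $\bw$ and $\bw'$.

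\emph{Case 1: $\bw$ and $\bw'$ lie in the same piece.}
\begin{itemize}
\item If both lie in $\cP_{1^\infty]}$, write $\bw=1D\bu$ and $\bw'=1D\bu'$. By Lemma \ref{l:P}(a), $\bu\precnlex\bu'$. The words $\bu,\bu'\in\cS$ have the same slope, so the preceding lemma applied with $r^2\in(0,1)$ gives $t_{r^2}(\bu)<t_{r^2}(\bu')$. By \eqref{eq:P2}, $t_{-r}(1D\bu)=-r+(1-r)t_{r^2}(\bu)$ is increasing in $t_{r^2}(\bu)$, which gives the claim.
\item If both lie in $\cP_{[0^\infty}$, the argument is identical with $a=0$.
\item If both lie in $\cP_{[1^\infty,0^\infty]}$, Lemma \ref{l:P}(b) gives $\bu'\precnlex\bu$, hence $t_{r^2}(\bu')<t_{r^2}(\bu)$. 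By \eqref{eq:P2} the map $\bu\mapsto t_{-r}(D\bu)=-(r^{-1}-1)t_{r^2}(\bu)$ is decreasing, so again $t_{-r}(\bw)<t_{-r}(\bw')$.
\end{itemize}

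In each subcase one must check that $\bu,\bu'$ really lie in $\cS$, not merely in $\Sigma^\N$. This holds because a word of $\cD$ starting with the letter $a$ can be written $aD\bu$ or $D\bu$ with $\bu\in\cS$, by the explicit decomposition in the proof of Lemma \ref{l:D}. The ambiguous words $0^\infty$ and $1^\infty$ are not in $\cD$, since they are periodic.

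\emph{Case 2: $\bw$ and $\bw'$ lie in different pieces.} The pieces are ordered under $\precalt$, and $\bw\precnalt\bw'$ forces $\bw$ into a lower piece. Lemma \ref{l:P2} then separates the values: every element of $\cP_{1^\infty]}$ has $t_{-r}\le t_{-r}(1^\infty)$, every element of the middle piece has $t_{-r}$ in $[t_{-r}(1^\infty),t_{-r}(0^\infty)]$, and every element of $\cP_{[0^\infty}$ has $t_{-r}\ge t_{-r}(0^\infty)$. Equality occurs only at $1^\infty$ or $0^\infty$, and neither belongs to $\cD$. Hence the inequality is strict.

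The expected main obstacle is Case 1: making sure the decomposition of an element of $\cD$ within a given piece yields $\bu\in\cS$ of the correct slope, so that the preceding lemma applies with parameter $r^2$. In particular one must handle the odd case $\bw=T(D(a^k\bs))$, where the leading letter of $\bw$ is peeled off to give $\bw=aD(a^{k-1}\bs)$. The remaining steps are routine sign bookkeeping.
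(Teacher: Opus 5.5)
Your proof is correct and follows the paper's argument. Lemma \ref{l:P2} reduces to the case where $\bw,\bw'$ lie in the same piece of $\cP$, and then Lemma \ref{l:P} together with \eqref{eq:P2} turns the comparison into one of $t_{r^2}$ on the underlying words $\bu,\bu'$. You also check explicitly that these $\bu,\bu'$ lie in $\cS$ with the correct slope, so that the $\cS$-version of Lemma \ref{l:tr} (the unlabeled lemma preceding Lemma \ref{l:D}) applies; the paper leaves this implicit and simply cites Lemma \ref{l:tr}.
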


\begin{proof}
By Lemma \ref{l:P2}, we may assume that one of the three intervals in 
Notation \ref{n:P} contains both $\bw$ and $\bw'$. 

Case $\bw,\bw'\in \cP_{1^\infty]}$ or $\bw,\bw'\in \cP_{[0^\infty}$. By Lemma 
\ref{l:P}, $\bw=aD\bu$, $\bw'=aD\bu'$, where $a\in \{0,1\}$ and $\bu\precnlex 
\bu'$. Then $t_{r^2}(\bu)<t_{r^2}(\bu')$ by Lemma \ref{l:tr} and consequently 
$t_{-r}(\bw)<t_{-r}(\bw')$ by \eqref{eq:P2}. 

Case $\bw,\bw'\in \cP_{[1^\infty,0^\infty]}$. By Lemma \ref{l:P}, $\bw=D\bu$, 
$\bw'=D\bu'$, where $\bu'\precnlex \bu$. Then $t_{r^2}(\bu')<t_{r^2}(\bu)$ by 
Lemma \ref{l:tr} and consequently $t_{-r}(\bw)<t_{-r}(\bw')$ by 
\eqref{eq:P2}. 
\end{proof}

\begin{proof}[Proof of Theorem \ref{t:r}]
The beginning of the proof of the ``only if'' part of the first assertion is 
similar to that of \cite[Theorem 1.2]{LZ}. Assume that 
$(t_{-r}(T^n\bv))_{n\ge 0}$ is contained in an interval $[A,B]$ of length 
$r+r^2-r^3$. Let $\mu =\max_{n\ge 1} v_n$ and $\nu=\min_{n\ge 1} v_n$. Since 
$\bv$ is aperiodic, we have $\mu>\nu$. Moreover, there exists a subword 
$\mu\mu'$ of $\bv$ with $\mu'<\mu$. Similarly, there exists a subword 
$\nu\nu'$ of $\bv$ with $\nu'>\nu$. If $\mu-\nu \ge 2$, then, by 
\eqref{eq:triv}, 
\begin{multline*}
2r-r(1-r)(1-r^3)=(1+r^2)(B-A)\ge t_{-r}(\nu\nu')-t_{-r}(\mu\mu')\\
= -(\nu-\mu)r+(\nu'-\mu')r^2\ge (\mu-\nu)r-(\mu-\nu-2)r^2\ge 2r,
\end{multline*}
which is a contradiction. Thus $\mu-\nu=1$ and consequently $v_n\in 
\{\nu,\nu+1\}$ for all $n\ge 1$. Let $g=-\nu$, $w_n=v_n+g\in \{0,1\}$ and 
$\bw=w_1w_2\dots $. Then 
$t_{-r}(\bw)=t_{-r}(\bv)+t_{-r}(g^\infty)=t_{-r}(\bv)-gr/(1+r)$. Up to 
replacing $\bv$ by $\bw$, we may assume $g=0$. By the above $01$ and $10$ are 
subwords of $\bw$. 

Note that $\bw$ has a subword of the form $10a$, where $a\in \{0,1\}$. If 
$010$ is a subword of $\bw$, then, by \eqref{eq:triv}, we have 
\[r+r^2-r^3(1-r)(1-r^2)=(1+r^3)(B-A)\ge t_{-r}(010)-t_{-r}(10a)=r+r^2+ar^3\ge r+r^2,\]
which is a contradiction. Thus $010$ is not a subword of $\bw$. Similarly, 
$101$ is not a subword of $\bw$. 

Next note that $0111$ and $1000$ are not both subwords of $\bw$. Indeed, 
otherwise, by \eqref{eq:triv}, we would have
\[r+r^2-r^3+r^4-r^4(1-r)(1-r^2)=(1+r^4)(B-A)\ge t_{-r}(0111)-t_{-r}(1000)=r+r^2-r^3+r^4.\] 
Up to replacing $w_n$ by $1-w_n$ for all $n\ge 1$, we may assume that $0111$ 
is not a subword of $\bw$. Then all blocks of 0s in $\bw$ are separated by 
$11$. Thus $\bw=1^y0^{z_0}110^{z_1}110^{z_2}\dots$, where $y\ge 0$, $z_0\ge 
1$, and $z_n\ge 2$ for all $n\ge 1$.

Assume that $z_n$ is odd for some $n\ge 1$. Then $0110^{z_n-1}$ and 
$10^{z_n}1$ are subwords of $\bw$. Thus, by \eqref{eq:triv}, we have 
\begin{multline}\label{eq:odd}
r+r^2-r^3+r^{z_n+2}-r^{z_n+2}(1-r)(1-r^2)=(1+r^{z_n+2})(B-A)\\
\ge t_{-r}(0110^{z_n-1})-t_{-r}(10^{z_n}1)=r+r^2-r^3+r^{z_n+2}.
\end{multline}
Contradiction.  It follows that $z_n$ is even for all $n\ge 1$.

The proof now diverges from the proof of \cite[Theorem 1.2]{LZ}.  We claim 
that $\bw\in \cP$. This holds if $y=0$ or $z_0$ is even. Assume that $y>0$ 
and $z_0$ is odd. If $z_0\le z_m+1$ for some $m\ge 1$, then $0110^{z_0-1}$ 
and $10^{z_0}1$ are subwords of $\bw$ and thus, by \eqref{eq:triv}, the 
inequality \eqref{eq:odd} holds for $n=0$, which is a contradiction. It 
follows that $z_0>z_n+1$ and consequently $z_0\ge z_n+3$ for all $n\ge 1$. 
Then $0110^{z_n}1$ and $10^{z_n+3}$ are subwords of $\bw$. Thus, by 
\eqref{eq:triv}, we have 
\begin{multline*} 
r+r^2-r^3+r^{z_n+4}-r^{z_n+4}(1-r)(1-r^2)=(1+r^{z_n+4})(B-A)\\
\ge t_{-r}(0110^{z_n}1)-t_{-r}(10^{z_n+3})=r+r^2-r^3+r^{z_n+4}.
\end{multline*}
Contradiction. 

This finishes the proof of the claim $\bw\in \cP$. It follows that there 
exists an infinite word $\bs$ such that $\bw=D\bs$ or $\bw=T(D\bs)$. Assume 
that $\bs\notin \cS$. Then, by Lemma \ref{l:S}, there exists a finite word 
$\bu$ such that $01\bu1$ and $10\bu 0$ are subwords of~$\bs$. It follows that 
$011(D\bu) 1$ and $100(D\bu)0$ are subwords of~$\bw$. Let $k$ be the length 
of $\bu$.  Then, by \eqref{eq:triv}, we have 
\begin{multline*} 
r+r^2-r^3+r^{2k+4}-r^{2k+4}(1-r)(1-r^2)=(1+r^{2k+4})(B-A)\\\ge t_{-r}(011(D\bu) 1)- t_{-r}(100(D\bu)0)= r+r^2-r^3+r^{2k+4}.
\end{multline*}
Contradiction. Therefore, $\bs\in \cS$ and consequently $\bw\in \cD$.

For the other statements of the theorem, we may again assume $g=0$. Let 
$\bw\in \cD$ and let $\theta$ be the slope of $\bw$. Let 
$W=\{t_{-r}(T^n\bw)\mid n\ge 0\}$. By Lemmas \ref{l:Dbounds} and \ref{l:Dtr}, 
since $T^n\bw\in \cD$ for all $n\ge 0$, $W$ is contained in the interval 
$[t_{-r}(100D\bc_\theta),t_{-r}(011D\bc_\theta)]$ of length $r+r^2-r^3$. By 
\cite[Theorem 1.2]{LZ}, $W$ is not contained in any shorter interval. By 
Lemma \ref{l:Dtr},  $t_{-r}(100D\bc_\theta)\in W$ if and only if 
$T^n\bw=100D\bc_\theta$ for some $n \ge 0$, and $t_{-r}(011D\bc_\theta)\in W$ 
if and only if $T^m\bw=011D\bc_\theta$ for some $m \ge 0$. If both conditions 
hold, then $T^{m+3}\bw=D\bc_\theta=T^{n+3}\bw$ for $m\neq n$, which 
contradicts the aperiodicity of $\bw$. Thus $W$ is contained in 
$[t_{-r}(100D\bc_\theta),t_{-r}(011D\bc_\theta))$ or 
$(t_{-r}(100D\bc_\theta),t_{-r}(011D\bc_\theta)]$. Furthermore, $W$ is 
contained in $(t_{-r}(100D\bc_\theta),t_{-r}(011D\bc_\theta))$ if and only if 
$\bw\notin \cC$. 
\end{proof}

To show the transcendence in Theorem \ref{t:main}, we use the theorem of 
Ferenczi and Mauduit \cite[Proposition~2]{FM} as follows. 

\begin{lemma}\label{l:trans}
Let $b\ge 2$ be an integer and let $\bw\in \cD$. Then $t_{-1/b}(\bw)$ is 
transcendental. 
\end{lemma}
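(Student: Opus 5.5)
We reduce $t_{-1/b}(\bw)$ to a base-$b^2$ expansion of a Sturmian-type word and then apply the theorem of Ferenczi and Mauduit \cite[Proposition~2]{FM}. That theorem says: if $\bs$ is a Sturmian word (more generally, a word of complexity $n+1$) and $B\ge 2$ is an integer, then $t_{1/B}(\bs)$ is transcendental. Put $r=1/b$.

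First, by Lemma \ref{l:D}, $\bw=D\bu$ or $\bw=T(D\bu)$ with $\bu\in\cS$. If $\bw=T(D\bu)=aD(T\bu)$, where $a$ is the first letter of $\bu$, then $T\bu\in\cS$. So in both cases $\bw$ is $D\bu'$ or $aD\bu'$ with $a\in\{0,1\}$ and $\bu'\in\cS$. By \eqref{eq:P2},
\[t_{-1/b}(D\bu')=-(b-1)t_{1/b^2}(\bu'),\qquad t_{-1/b}(aD\bu')=-a/b+(1-1/b)\,t_{1/b^2}(\bu').\]
Both expressions are nonzero rational multiples of $t_{1/b^2}(\bu')$ plus a rational number. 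Hence it suffices to show that $t_{1/b^2}(\bu')$ is transcendental.

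Next, write $\bu'=c^l\bs$ with $c\in\{0,1\}$, $l\ge0$ and $\bs$ Sturmian. Then
\[t_{1/b^2}(\bu')=t_{1/b^2}(c^l)+b^{-2l}\,t_{1/b^2}(\bs),\]
and $t_{1/b^2}(c^l)$ is rational. So the claim reduces to the transcendence of $t_{1/b^2}(\bs)$. The Sturmian word $\bs$ is over $\{0,1\}$, and the base $B=b^2\ge4$ exceeds the largest digit, so the Ferenczi--Mauduit theorem applies directly and gives the transcendence.

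The only point needing care is matching the hypotheses of \cite[Proposition~2]{FM}, namely an integer base and a Sturmian digit sequence. Both are met after the reduction, since $\bs$ itself is Sturmian, and prepending $c^l$ or taking the shift $T\bu$ only changes the number by a nonzero rational factor plus a rational term. The remaining steps are bookkeeping with \eqref{eq:P2}.
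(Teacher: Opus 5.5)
Your proof is correct and follows essentially the paper's route: use $t_{-r}(D\bs)=-(r^{-1}-1)t_{r^2}(\bs)$ to write $t_{-1/b}(\bw)$ as a rational number plus a nonzero rational multiple of $t_{1/b^2}(\bs)$ for a Sturmian word $\bs$, then apply Ferenczi--Mauduit in base $b^2$. The only difference is that the paper writes $\bw=a^lD\bs$ directly from the definition of $\cD$ and computes $t_{-1/b}(\bw)=t_{-1/b}(a^l)+(-b)^{-l}(-b+1)t_{1/b^2}(\bs)$ in one step, so your detour through Lemma \ref{l:D} and the class $\cS$ is unnecessary but harmless.
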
 

\begin{proof}
We have $\bw=a^lD\bs$, where $a\in \{0,1\}$, $l\ge 0$, and $\bs$ is Sturmian. 
Then $t_{-1/b}(\bw)=t_{-1/b}(a^l)+(-b)^{-l}(-b+1)t_{1/b^2}(\bs)$. By 
\cite[Proposition~2]{FM}, $t_{1/b^2}(\bs)$ is transcendental. Thus the same 
holds for $t_{-1/b}(\bw)$. 
\end{proof}

\begin{proof}[Proof of Theorem \ref{t:main}]
We apply Theorem \ref{t:r} to $r=1/b$ as follows. Assume that 
$(\pi(\xi(-b)^n))_{n\ge 0}$ is contained in an interval $I\subseteq \R/\Z$ of 
length $b^{-1}+b^{-2}-b^{-3}<1$. Choose $\eta\in \R$ such that 
$\pi(\eta)\notin I$. Then $I$ lifts to a unique interval $\tilde I\subseteq 
(\eta,\eta+1)$ of length $b^{-1}+b^{-2}-b^{-3}$. For $n\ge 0$, let $y_n=\{\xi 
(-b)^n-\eta\}+\eta\in (\eta,\eta+1)$ and $v_n=-by_n-y_{n+1}\in \Z\cap 
(-(b+1)(\eta+1),-(b+1)\eta)$. Since $\pi(y_n)=\pi(\xi (-b)^n)$, we have 
$y_n\in \tilde I$ for all $n\ge 0$. Let $\bv=v_0v_1\dots$. Then 
$t_{-1/b}(T^n\bv)=\sum_{i=0}^\infty v_{n+i} (-b)^{-(i+1)}=y_n$. Since $y_0$ 
is irrational, $\bv$ is aperiodic. Then, by Theorem \ref{t:r}, there exist 
$g\in \Z$ and $\bw=w_0w_1\dotso\in \cD$ such that $v_i=-g+w_i$ for all $i\ge 
0$. Thus $\pi(\xi)=\pi(y_0)=\pi(g/(b+1)+t_{-1/b}(\bw))$. By Lemma 
\ref{l:trans}, it follows that $\xi$ is transcendental. The other assertions 
of Theorem \ref{t:main} follow easily from the corresponding assertions of 
Theorem \ref{t:r} and the formula  
$\pi(t_{-1/b}(T^n\bv))=\pi((-b)^nt_{-1/b}(\bv))$. 
\end{proof}

\begin{proof}[Proof of Corollary \ref{c:alt}]
While this can be deduced from Theorem \ref{t:main}, we deduce it more 
directly from Theorem \ref{t:r}, in parallel with the proof of Corollary 
\ref{c:Veerman}. Take $0<r<1/2$. If there exists $\bu$ such that 
$100\bu\precalt T^n\bw\precalt011\bu$, then, by Lemma \ref{l:t-r}, 
$(t_{-r}(T^n\bw))_{n\ge 0}$ is contained in the interval 
$[t_{-r}(100\bu),t_{-r}(011\bu)]$ of length $r+r^2-r^3$, which implies 
$\bw\in \cD$ by Theorem \ref{t:r}. Conversely, assume that $\bw\in \cD$. Let 
$\theta$ be the slope of $\bw$. We have already proven  
$100D\bc_\theta\precalt T^n\bw\precalt011D\bc_\theta$ in Lemma 
\ref{l:Dbounds}. If there exists an infinite word $\bv$ over $\{0,1\}$ such 
that $100D\bc_\theta\precnalt \bv\precalt T^n\bw$ for all $n\ge 0$, then, by 
Lemma \ref{l:t-r}, $(t_{-r}(T^n\bw))_{n\ge 0}$ is contained in the interval 
$[t_{-r}(\bv),t_{-r}(011D\bc_\theta)]$ of length $<r+r^2-r^3$, which 
contradicts \cite[Theorem 1.2]{LZ}. Thus 
$100D\bc_\theta=\inf\nolimits^\alt_{n\ge 0} T^n\bw$ and similarly 
$011D\bc_\theta=\sup\nolimits^\alt_{n\ge 0} T^n \bw$. 
\end{proof}

\begin{remark}
The interval with endpoints \eqref{eq:ex} in Theorem \ref{t:main} is a 
neighborhood of $\pi([(g-1)/(b+1)-(b^{-2}-b^{-3}),g/(b+1)+(b^{-2}-b^{-3})])$, 
where $g\in \Z$. In particular, the union of all the open intervals of 
$\R/\Z$ of length $b^{-1}+b^{-2}-b^{-3}$ containing $(\pi(\xi(-b)^n))_{n\ge 
0}$ for some irrational real number $\xi$ is the entire $\R/\Z$. This stands 
in contrast to the case of positive common ratio (\cite[Theorem 2.1]{BD} or 
Corollary \ref{c:BD}), where the closure of each interval of length $1/b$ 
containing $(\pi(\xi b^n))_{n\ge 0}$ for some irrational real number $\xi$ is 
contained in $\pi((g/(b-1),(g+1)/(b-1)))$ for some integer~$g$. 

Dubickas (\cite[Theorem~3]{Dubickas}, \cite[Theorem~2]{Dubickas-neg}) showed 
that for any $\epsilon>0$ and any irrational real number $\xi$, 
$(\pi(\xi(-b)^n))_{n\ge 0}$ is not contained in any of the following four 
intervals: 
\[\pi([-A+\epsilon,A-\epsilon]),\quad \pi([\epsilon,B-\epsilon]),\quad \pi([1/2-A'+\epsilon,1/2+A'-\epsilon]), \quad \pi([1-B+\epsilon,1-\epsilon]),\]
where $B=1-\prod_{k=1}^\infty (1-b^{-(2^k+(-1)^{k-1})/3})$, 
$A=(1-(1-b^{-1})P)/2$, $P=\prod_{k=0}^\infty (1-b^{-2^k})$, $A'=A$ if $b$ is 
odd and $A'=(1-P)/2$ if $b$ is even. Theorem \ref{t:main} implies that the 
interval with endpoints \eqref{eq:ex} is not contained in any of the four 
intervals. 
\end{remark}

\end{document}